\definecolor{orange}{rgb}{1,0.5,0}
\definecolor{purple}{rgb}{0.5,0,1}
\definecolor{darkgreen}{rgb}{0,0.5,0}
\definecolor{darkcyan}{rgb}{0,0.5,0.5}
\newtheorem{thm}{Theorem}
\newtheorem{prop}[thm]{Proposition}
\newtheorem{lemma}[thm]{Lemma}
\newtheorem{cor}[thm]{Corollary}
\theoremstyle{definition}
\newtheorem{definition}[thm]{Definition}
\newcommand{\Aa}{A_1} 
\newcommand{\Ab}{A_2} 
\newcommand{\Ac}{A_3} 
\newcommand{\Ad}{A_4} 
\newcommand{\Ae}{A_5} 
\newcommand{\Af}{A_6} 
\newcommand{\Ai}{A_i}
\newcommand{\Aj}{A_j}
\newcommand{\Ba}{B_1}
\newcommand{\Bb}{B_2} 
\newcommand{\Bc}{B_3} 
\newcommand{\Bd}{B_4} 
\newcommand{\Be}{B_5} 
\newcommand{\Bf}{B_6} 
\newcommand{\Bg}{B_7}
\newcommand{\Bi}{B_i}
\newcommand{\Bk}{B_k} 
\newcommand{\dod}[3]{\raisebox{-0.5cm}{\includegraphics[scale = 0.60]{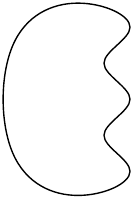}}\;\begin{matrix} #1 \\ #2 \\ #3 \end{matrix}}
\newcommand{\dood}[4]{\raisebox{-0.7cm}{\includegraphics[scale = 0.40]{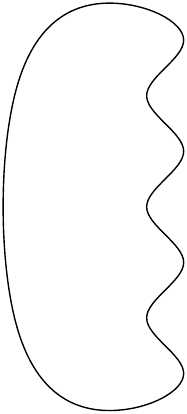}}\;\begin{matrix} #1 \\ #2 \\ #3 \\ #4 \end{matrix}}
\newcommand{\adj}{\mathop{\mathrm{adj}}}
\title{A four-vertex, quadratic, spanning forest polynomial identity}
\author{Aleksandar Vlasev}
\author{Karen Yeats}
\thanks{The authors thank NSERC for its support}
\subjclass[2010]{Primary 05C31; Secondary 05C50}
\begin{document}

\begin{abstract}
The classical Dodgson identity can be interpreted as a quadratic identity of spanning forest polynomials, where the spanning forests used in each polynomial are defined by how three marked vertices are divided among the component trees.
We prove an analogous result with four marked vertices.
\end{abstract}

\maketitle


\section{Introduction}

Let $G$ be a graph with $m$ vertices, $n$ edges and let the $i^{\text{th}}$ edge be assigned a variable $\alpha_i$. Then we define the graph polynomial of $G$ as
\[
    \Psi_G = \sum_{T\subseteq G}\prod_{e\notin T}\alpha_e
\]
where the sum runs over the spanning trees $T$ of $G$. The reason why we pick edges not in the trees is that this form arises naturally in quantum field theory, see for example \cite{BW, Brphi4, Kcore}. We can also obtain this polynomial via the matrix-tree theorem. Let $A$ be the $n \times n$ diagonal matrix with the variables $\alpha_i$. Orient the edges in the graph and let $E$ be the signed $m \times n$ incidence matrix for this orientation. Let $\widehat{E}$ be the matrix $E$ with any row removed. Define the $m+n$ by $m+n$ block matrix
\[
    M_G = \begin{bmatrix}A & \widehat{E}^T \\ -\widehat{E} & 0\end{bmatrix}.
\]
Then the matrix-tree theorem states that
\[
    \Psi_G = \det(M_G).
\]
To put this in the usual form of the matrix-tree theorem, note that $A$ is invertible, so we can calculate the determinant using the Schur complement; in this case, 
\begin{align*}
  \det(M) & = \alpha_1\cdots\alpha_n \det(0-(-\widehat{E}A^{-1}\widehat{E}^T)) \\
  & =\alpha_1\cdots\alpha_n \det(\widehat{E}A^{-1}\widehat{E}^T)
\end{align*}where $\widehat{E}A^{-1}\widehat{E}^T$ is the graph Laplacian with a row and column removed and with inverted variables.  See also Proposition 21 of \cite{Brbig}. 

There are two important ways to generalize $\Psi_G$ -- one via the polynomials and one via the matrix determinant. Let $P = P_1\cup\dots\cup P_k$ be a set partition of a set of vertices in $G$. Then define the spanning forest polynomial for $G$ and $P$ as
\[
    \Phi_G^P = \sum_{F \subseteq G}\prod_{e\notin F} \alpha_e
\]
where the sum runs over spanning forests $F$ of $G$ composed of tree components $T_1,\dots,T_k$ where the vertices $P_i$ are in tree $T_i$. Alternatively, let $I,J,K$ be sets of indices with $|I|=|J|$. Define the Dodgson polynomial $\Psi_{G,K}^{I,J}$ as
\[
    \Psi_{G,K}^{I,J} = \det(M_G(I,J))_K
\]
where $M_G(I,J)$ is the submatrix obtained by removing the rows indexed by $I$ and the columns indexed by $J$ from $M_G$, and the subscript $K$ indicates that we are setting the variables $\alpha$ indexed by $K$ to 0. These two generalizations are related -- every Dodgson polynomial can be expressed as a sum of signed spanning forest polynomials (see \cite{BrY}). Thus we can use determinant identities to derive identities for spanning forest polynomials. For any square matrix $M$, we have the classical Dodgson identity
\[
  \det(M(12,12))\det(M)= \det(M(1,1))\det(M(2,2)) - \det(M(1,2))\det(M(2,1))
\]
which was popularized by Dodgson through his condensation algorithm \cite{dodgson}. Let $G$ be a graph of the form
\[
   \includegraphics{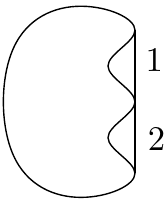}
\]
with two edges labelled $1$ and $2$, connecting three vertices $v_1,v_2$ and $v_3$ from top to bottom. The Dodgson identity gives the spanning forest polynomial identity (see section \ref{sec Dodgson id})
\begin{equation}\label{3 and 1 id}
   \dod{}{}{}\dod{1}{2}{3} = \dod{1}{1}{2} \dod{1}{2}{1} +\dod{1}{1}{2} \dod{1}{2}{2} +\dod{1}{2}{1} \dod{1}{2}{2}
\end{equation}
where for example, the graph with labels $1,1,2$ on the vertices $v_1,v_2,v_3$ represents $\Phi_G^P$ with $P =\{v_1,v_2\}\cup\{v_3\}$.

This result can be interpreted as saying that if we transfer an extra edge from the left hand factor of the left hand side to the right hand factor of the left hand side, thus cutting a spanning tree into two in the left hand factor and joining two of the three trees together in the right hand factor, then we get all pairs of spanning forests with exactly two trees.  However, it is subtle to see that the counting matches on both sides, and seems to require chains of edges to be transferred, along the lines of the the combinatorial proof of the Dodgson identity due to Zeilberger \cite{Zeil}.

Equation \eqref{3 and 1 id} and its combinatorial interpretation prompted us to investigate spanning forest polynomial identities of the form
\[
  \dood{}{}{}{} \dood{1}{2}{3}{4} = \dood{a_1}{a_2}{a_3}{a_4} \dood{b_1}{b_2}{b_3}{b_4} + \dood{c_1}{c_2}{c_3}{c_4} \dood{d_1}{d_2}{d_3}{d_4} +\cdots + \dood{e_1}{e_2}{e_3}{e_4} \dood{f_1}{f_2}{f_3}{f_4} 
\]
Our work resulted in such an identity (Theorem \ref{main thm}) which is proved in this paper. For this result we cannot simply interpret a classical determinantal identity; the Jacobi identity on $M$ (see Corollary \ref{jacobi}) naturally gives a cubic identity for such spanning forest polynomials, while the usual Dodgson identities on submatrices of $M$ can only relate spanning forest polynomials whose degrees differ by at most 2.  Rather, we need to combine classical identities in nontrivial ways.

The paper is organized as follows: In section \ref{sec graph polys} we will set up our definitions.  In section \ref{sec Dodgson id} we will define spanning forest polynomials and give their relation to the minors of $M$.  The main result itself is presented and proved in section \ref{sec main}. Finally, in section \ref{sec concl} we conclude with a discussion of the main result, its combinatorial interpretations, and possible extensions.


\section{Graph polynomials}\label{sec graph polys}

\begin{definition} Let $G$ be a graph and let $M_G$ be a matrix built as in the previous section. Then we define
  \[
  \Psi_G = \det(M_G)
  \]
\end{definition}
By the matrix-tree theorem, $\Psi_G$ is independent of the choice of $M_G$.  We will call $\Psi_G$ the graph polynomial or Kirchhoff polynomial of $G$. We fix a choice of $M = M_G$ for $G$.
\begin{definition}
  Let $I$, $J$, and $K$ be subsets of the edges of $G$ with $|I|=|J|$.  Let $M(I,J)_K$ be the matrix obtained from $M$ by removing the rows indexed by edges of $I$, the columns indexed by edges of $J$, and setting $\alpha_i=0$ for all $i\in K$.  Then we define the Dodgson polynomials
  \[
  \Psi^{I,J}_{G,K} = \det M(I,J)_K
  \]
 When $G$ is clear it will be suppressed from the notation. Also, if $K=\emptyset$ we may suppress it from the notation.
\end{definition} 
Up to sign these polynomials are independent of the choice of $M$ (see \cite{Brbig}). By definition it is evident that $\Psi^{\emptyset, \emptyset}_{G,\emptyset} = \Psi_G$.  Note that if any element of $K$ appears in $I$ or $J$ then it does not appear in $M(I,J)$, so setting it to zero has no effect.

Contraction and deletion of edges is natural at the level of Dodgson polynomials.
\begin{prop} Let $G$ be a graph and let $e_i$ denote the $i$-th edge in $G$. Then
  \begin{align*}
    \Psi_G^{i,i} & = \Psi_{G\backslash e_i} \\
    \Psi_{G,i} & = \Psi_{G/e_i}
  \end{align*}
\end{prop}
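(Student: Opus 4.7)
The plan is to handle the deletion identity and the contraction identity separately, with each reducing either to a direct comparison of matrices or to a direct inspection of the combinatorial sum underlying $\Psi_G$.

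For $\Psi_G^{i,i}=\Psi_{G\backslash e_i}$, I would compute $M_G(i,i)$ explicitly. The edge $e_i$ indexes the $i$-th row and $i$-th column of the top-left $A$-block of $M_G$, and also the $i$-th column of $\widehat{E}$ (equivalently, the $i$-th row of $\widehat{E}^T$). Deleting row $i$ and column $i$ of $M_G$ therefore preserves the block structure and produces
\[
\begin{bmatrix} A' & (\widehat{E}')^T \\ -\widehat{E}' & 0 \end{bmatrix},
\]
where $A'$ is the diagonal matrix of the remaining variables $\alpha_j$ ($j\neq i$) and $\widehat{E}'$ is $\widehat{E}$ with its $i$-th column removed. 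Since $\widehat{E}'$ is precisely the signed incidence matrix of $G\backslash e_i$ (with the same vertex row removed), the displayed matrix is a valid choice of $M$ for $G\backslash e_i$, and the first identity follows on taking determinants.

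For $\Psi_{G,i}=\Psi_{G/e_i}$, the cleanest route is the sum-over-spanning-trees formula $\Psi_G=\sum_{T}\prod_{e\notin T}\alpha_e$ that the matrix-tree theorem guarantees agrees with $\det M_G$. Setting $\alpha_i=0$ annihilates every term in which $e_i\notin T$, leaving only those spanning trees $T$ of $G$ that contain $e_i$. These are in natural bijection with the spanning trees of $G/e_i$ via $T\mapsto T/e_i$, and the bijection preserves the edge complement, so the surviving terms are exactly the terms of $\Psi_{G/e_i}$.

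The main obstacle is essentially bookkeeping rather than any serious combinatorial identity. In the deletion part one must use the same vertex row in both $M_G$ and $M_{G\backslash e_i}$, and confirm that the ``up to sign'' ambiguity recorded after the definition of Dodgson polynomials is not triggered here, since no row or column operations intervene. In the contraction part, one should note that substituting $\alpha_i=0$ commutes with taking the determinant (the entries of $M_G$ are polynomials in the $\alpha_e$), so the polynomial identity follows directly from the bijection of spanning trees.
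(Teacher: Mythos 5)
Your proposal is correct and follows exactly the paper's approach: the paper's proof is the one-line remark that the first identity follows from the matrix definition of $\Psi$ and the second from the sum-over-spanning-trees definition, and your argument is simply a careful expansion of those same two observations. No issues.
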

\begin{proof} The first identity follows immediately from the matrix definition of $\Psi$ and the second from the sum of spanning trees definition.
\end{proof}

The all-minors matrix-tree theorem \cite{Chai} tells us that the monomials of any $\Psi^{I,J}_{G,K}$ result from spanning forests of $G$. For our purposes it is most useful to organize these spanning forests with the following spanning forest polynomials.

\begin{definition}\label{spanning forest def}
  Let $P= P_1\cup P_2\cup \cdots\cup P_k$ be a set partition of a subset of the vertices of $G$. Then we define
  \[
  \Phi^P_G = \sum_F\prod_{e\not\in F}\alpha_e 
  \]
  where the sum runs over spanning forests $F=T_1\cup T_2 \cup \cdots \cup T_k$ with $k$ component trees so that the vertices of $P_i$ are in tree $T_i$.  We note that we are allowing trees consisting of a single vertex.
\end{definition}

The relation between Dodgson polynomials and spanning forest polynomials is given by the following proposition.
\begin{prop}
\begin{equation}\label{Psi to Phi}
  \Psi^{I,J}_{G,K} = \sum_P \pm \Phi^P_{G \backslash (I\cup J\cup K)}
\end{equation}
where the sum runs over set partitions $P$ of the end points of edges of $I$, $J$, and $K$ with the property that the forests corresponding to each set partition become trees in both \[G \backslash I / (J\cup K) \quad \text{and} \quad G \backslash J / (I\cup K)\] 
\end{prop}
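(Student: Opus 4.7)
The plan is to derive the identity from Chaiken's all-minors matrix-tree theorem after first reducing $M(I,J)_K$ by a Schur complement.

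First I would write $M = M_G$ in the block form of the introduction and partition the rows and columns of $M(I,J)_K$ into three groups: edges not in $I\cup J\cup K$, edges in $K$ (whose diagonal entries $\alpha_e$ have been zeroed out), and the vertex block coming from $\hat{E}$ and $-\hat{E}^T$. Taking the Schur complement against the invertible diagonal block of surviving edge variables $\alpha_e$ factors out $\prod_{e\notin I\cup J\cup K}\alpha_e$ and leaves a reduced determinant $\det N$ whose content is the incidence structure of the edges of $K$ together with the row- and column-deleted Laplacian of $G\setminus(I\cup J\cup K)$ with inverted variables. Multiplying back by the factored-out product converts the inverted variables to the familiar ``edges not in the forest'' weights that define spanning forest polynomials.

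Next I would apply Chaiken's theorem to interpret $\det N$. Chaiken's theorem expresses a minor of a Laplacian-type matrix as a signed sum over spanning forests whose component trees are constrained by which removed row and which removed column each forest vertex is associated to. In our setup, the row-side constraints come from $I$ together with $K$, and the column-side constraints come from $J$ together with $K$. These two sets of constraints translate directly into the two conditions in the statement: since contracting an edge merges the components containing its endpoints, requiring the forest to become a spanning tree in $G\setminus I/(J\cup K)$ is exactly the row-side constraint, and requiring it to become a spanning tree in $G\setminus J/(I\cup K)$ is exactly the column-side constraint. Grouping the resulting forests by the set partition $P$ of endpoints of edges in $I\cup J\cup K$ into component trees yields the displayed sum $\sum_P\pm\Phi^P_{G\setminus(I\cup J\cup K)}$.

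The main obstacle will be the sign bookkeeping. Signs accumulate from the Schur complement, from the row and column permutations needed to bring $N$ into the standard form required by Chaiken's theorem, and from Chaiken's own sign, which involves a matching of removed rows to removed columns. Each contribution is routine once the blocks are arranged correctly, but their combination, and the verification that a single, well-defined $\pm$ sign attaches to each $\Phi^P$, is where care is required. Everything else reduces to recognizing that the double constraint (row-side and column-side) forced by Chaiken's theorem is precisely the double quotient-and-deletion condition in the statement.
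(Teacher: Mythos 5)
Your overall strategy---reduce $\det M(I,J)_K$ by a Schur complement against the block of surviving $\alpha_e$'s and then invoke an all-minors matrix-tree theorem---is in the right spirit, but there is a concrete gap at the step ``apply Chaiken's theorem to interpret $\det N$.'' After the Schur complement the reduced matrix has the block form
\[
N=\begin{bmatrix} 0 & \widehat{E}[(J\cup K)\setminus I]^T \\ -\widehat{E}[(I\cup K)\setminus J] & L' \end{bmatrix}
\]
where $L'$ is the reduced Laplacian of $G\setminus(I\cup J\cup K)$ with inverted variables. This is a Laplacian \emph{bordered} by the incidence vectors of the edges of $I$, $J$, $K$, not a Laplacian with rows and columns deleted, which is what Chaiken's theorem addresses; your phrase ``row- and column-deleted Laplacian'' is inaccurate, since $I$, $J$, $K$ index edges while Chaiken's deleted rows and columns index vertices. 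Consequently the dictionary ``row-side constraints come from $I$ together with $K$, column-side from $J$ together with $K$'' is asserted rather than derived. Bridging this gap needs a further argument---for instance a multilinear expansion of $\det N$ along the border rows and columns, or a second Schur complement against $L'$ combined with the Jacobi identity for minors of the adjugate---and that missing step is precisely where the set partitions $P$ and the two conditions (becoming a tree in $G\backslash I/(J\cup K)$ and in $G\backslash J/(I\cup K)$) would actually have to emerge.

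The paper's proof avoids the bordered Laplacian entirely: it reads off the coefficient of a fixed monomial $m$, with corresponding forest $F$, directly from the form of $M$, obtaining the block-antidiagonal determinant
\[
\det\begin{bmatrix} 0 & \widehat{E}[J\cup K\cup F]^T \\ -\widehat{E}[I\cup K\cup F] & 0 \end{bmatrix},
\]
which factors into two incidence-matrix determinants. Each factor is $\pm 1$ if the corresponding edge set is a spanning tree of $G$ and $0$ otherwise (the stripped-down matrix-tree theorem), and ``$I\cup K\cup F$ spans $G$ as a tree'' is literally the statement that $F$ becomes a tree in $G\backslash J/(I\cup K)$, and symmetrically for the other factor. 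Even if you repair the Chaiken step, you would still owe the verification that the resulting sign depends only on the partition $P$ induced by $F$ on the endpoints (so that a single $\pm$ attaches to each $\Phi^P$); you correctly flag this, but the more serious issue is the misapplied theorem above.
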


\begin{proof}  For the full details, see Proposition 12 of \cite{BrY}.  To sketch the argument, equation \eqref{Psi to Phi} is a direct consequence of two facts. Let $\widehat{E}[S]$ be the submatrix of $\widehat{E}$ consisting of columns indexed by $S$. First, the coefficient of a given monomial $m$ in $\Psi^{I,J}_K$ is 
\[
  \det\begin{bmatrix} 0 & \widehat{E}[J\cup K\cup F]^T \\ -\widehat{E}[I\cup K\cup F] & 0 \end{bmatrix} 
\]
where $F$ is the forest corresponding to $m$  (that is the edges of $G\backslash (I\cup J\cup K)$ which do not contribute to $m$). This fact follows directly from the form of $M$. Second, a square matrix formed of columns of $\widehat{E}$ has determinant $\pm 1$ if the edges corresponding to those columns are a spanning tree of $G$, and has determinant $0$ otherwise.  This fact is the matrix-tree theorem in its most stripped down form, see for example \cite{Brbig} Lemma 20.  
\end{proof}



\section{The classical Dodgson identity}\label{sec Dodgson id}

In this section we interpret the classical Dodgson identity in terms of spanning forest polynomials. Consider the graph $G$
\[
\includegraphics{tie}
\]
Apply the Dodgson determinant identity to the matrix $M$ for $G$
\[
     \det(M(1,1))\det(M(2,2)) - \det(M(1,2))\det(M(2,1)) = \det(M)\det(M(12,12))
\]
Interpreting this in terms of Dodgson polynomials gives
\[
    \Psi_G^{1,1} \Psi_G^{2,2} - \Psi_G^{1,2}\Psi_G^{1,2} = \Psi_G \Psi_G^{12,12}
\]
and after setting the variables for edges 1 and 2 to 0 we obtain
\[
\Psi^{1,1}_{G,2} \Psi^{2,2}_{G,1} - (\Psi_G^{1,2})^2 = \Psi_G \Psi_G^{12,12}.
\]
For a generalization, see Corollary \ref{second cor}. Using the deletion-contraction relations we obtain
\[
   \Psi_{G\backslash e_1/e_2} \Psi_{G\backslash e_2/e_1} - (\Psi_G^{1,2})^2 = \Psi_G \Psi_{G\backslash \{e_1,e_2\}}
\]
and converting to spanning forest polynomials we find that
\[
\left(\Phi_H^{\{a,c\}, \{b\}}+\Phi_H^{\{a\}, \{b,c\}}\right)\left(\Phi_H^{\{a,b\}, \{c\}}+\Phi_H^{\{a,c\},\{b\}}\right) -\left (\pm\Phi_H^{\{a,c\},\{b\}}\right)^2 = \Phi_H^{\{a,b,c\}}\Phi_H^{\{a\},\{b\},\{c\}}
\]
where $H$ is the graph with edges $1$ and $2$ removed. Rearranging and cancelling the squared term we find that
\[
     \Phi_H^{\{a,b,c\}}\Phi_H^{\{a\},\{b\},\{c\}} = \Phi_H^{\{a,b\}, \{c\}}\Phi_H^{\{a,c\}, \{b\}}+\Phi_H^{\{a,b\}, \{c\}}\Phi_H^{\{a\}, \{b,c\}}+\Phi_H^{\{a,c\},\{b\}}\Phi_H^{\{a\}, \{b,c\}}
\]
which is just equation \eqref{3 and 1 id} written in the spanning forest polynomial notation.  See Proposition 22 in \cite{BrY} for more details.


\section{The main result}\label{sec main}
In section \ref{sec Dodgson id} we gave the spanning forest polynomial version of the Dodgson identity.  The main result of this paper is an analogous spanning forest polynomial identity for 4 marked vertices.  Let us specialize our notation to this situation.

\begin{definition}\label{A and B}
Let $v_1$, $v_2$, $v_3$, and $v_4$ be four distinct vertices of a graph $G$.  We will write $(c_1,c_2,c_3,c_4)$ with $c_i \in \{1,2,3,4,-\}$ to denote the spanning forest polynomial of the graph $G$ defined by the partition of $\{v_i: c_i \neq -\}$ with one part for each distinct integer $\ell$ in $(c_1,c_2,c_3,c_4)$ defined by $\{v_i: c_i=\ell\}$, and no other parts. Particularly useful are the following special cases
\begin{align*}
   \Aa &= (1,1,2,3), \quad \Ab = (1,2,1,3), \quad \Ac = (1,2,2,3),\\
   \Ad &= (1,2,3,1), \quad \Ae = (1,2,3,2), \quad \Af = (1,2,3,3)
\end{align*}
of 3 marked vertices each, the following cases
\begin{align*}
   \Ba = (1,1,1,2), \quad &\Bb = (1,1,2,1), \quad \Bc = (1,2,1,1),\\
   \Bd = (1,2,2,2), \quad &\Be = (1,1,2,2), \quad \Bf = (1,2,1,2),\\
                    \quad &\Bg = (1,2,2,1)
\end{align*}
of 3 marked vertices each, and finally let
\[
  P = (1,1,1,1).
\]
\end{definition}
The $\Ai$ and $\Bi$ are the different ways in which we can partition four vertices in 3 and 2 sets respectively.  $P$ is just $\Psi_G$ for this $G$ with four marked vertices.

\begin{thm}\label{main thm}Let $G$ be a graph with four marked vertices. Then
\begin{equation}
\begin{aligned}\label{emp formula}
(1,1,1,1)(1,2,3,4) &= \quad(1 - x_ 1 - x_ 2) \Ad  \Ba + x_ 7 \Ab  \Bd+ (1 - x_ 3 - x_ 2) \Ae  \Ba \\
&\quad+ (1 - x_ 1 - x_ 4) \Af  \Ba +  x_ 2 \Ab  \Bb + ( x_ 3 + x_ 2- x_ 5 ) \Ac  \Bb \\
&\quad+ (1 - x_ 1 - x_ 6) \Af  \Bb +  x_ 1 \Aa  \Bc + ( x_ 1 - x_ 7 + x_ 4) \Ac  \Bc \\
&\quad+ ( x_ 1 - x_ 8 + x_ 6) \Ae  \Bc + x_ 5 \Aa  \Bd + ( x_ 1 - x_ 5 + x_ 4) \Ac  \Be \\
&\quad+ ( x_ 1 - x_ 5 + x_ 6) \Ae \Be +  x_ 3 \Aa  \Bf + (   x_ 3 + x_ 2 - x_ 7) \Ac  \Bf \\
&\quad+ (1 - x_ 1 - x_ 2 + x_ 8 -   x_ 6) \Ad  \Bf + (   x_ 2 + x_ 7 - x_ 4) \Ab  \Bg  \\
&\quad+(1 - x_ 1 - x_ 7 + x_ 8 - x_ 6) \Af  \Bf + (  x_ 1 + x_ 5 - x_ 3) \Aa  \Bg \\
&\quad+ (1 + x_ 5 - x_ 3 - x_ 2 -  x_ 8) \Ae  \Bg \\
&\quad+ (1 - x_ 1 + x_ 7 - x_ 4 - x_ 8) \Af  \Bg \\
&\quad + x_ 8 \Ad  \Bd +  x_ 4 \Ab  \Be  +  x_ 6 \Ad  \Be \\
\end{aligned}
\end{equation}
for any $x_1, \ldots , x_8$.
\end{thm}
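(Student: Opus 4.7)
The plan is to exploit the affine dependence of the right-hand side of \eqref{emp formula} on $x_1,\ldots,x_8$ to decouple the theorem into a base identity and eight syzygies. Because each $x_i$ appears linearly, the identity is equivalent to the conjunction of the $x_1=\cdots=x_8=0$ specialization
\[
  P\cdot(1,2,3,4) = \Ad\Ba + \Ae\Ba + \Af\Ba + \Af\Bb + \Ad\Bf + \Af\Bf + \Ae\Bg + \Af\Bg
\]
together with the vanishing, for each $i = 1,\ldots,8$, of the integer linear combination of products $\Ai\Bk$ that appears as the coefficient of $x_i$. Thus nine separate quadratic identities among 4-vertex spanning forest polynomials remain to be proved; the 8-parameter freedom in the statement reflects the existence of at least an eight-dimensional space of syzygies among the relevant products $\Ai\Bk$.

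To establish each of the nine identities, I would translate the spanning forest polynomials into Dodgson polynomials using equation \eqref{Psi to Phi}. Attaching an auxiliary edge between each pair of the marked vertices $v_1,v_2,v_3,v_4$ and then deleting or contracting these edges realizes each of $P$, $(1,2,3,4)$, the $\Ai$, and the $\Bk$ as, up to sign, a single Dodgson minor on an enlarged graph $\widetilde G$. Every product on either side of the target identities then becomes a product of two minors of the block matrix $M_{\widetilde G}$, placing us squarely in the setting of classical determinant identities: the Dodgson condensation identity of Section \ref{sec Dodgson id} and, where needed, Jacobi's identity for complementary minors.

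The main obstacle is exactly the one flagged in the introduction: Dodgson applied to submatrices of $M_{\widetilde G}$ only relates Dodgson polynomials whose spanning forest degrees differ by at most $2$, and Jacobi naturally produces a cubic rather than quadratic identity at the spanning forest level. No single classical identity therefore yields the base case directly, and each of the nine quadratic identities must emerge as a nontrivial combination of several classical ones. I would proceed by enumerating all Dodgson and Jacobi identities on $M_{\widetilde G}$ whose terms involve only the polynomials $P$, $(1,2,3,4)$, the $\Ai$, and the $\Bk$, and then, with computer algebra support, solve a linear system to exhibit the base identity and each syzygy as an explicit combination of these classical identities. Once the combinations are found, verification reduces to the mechanical task of rewriting each contributing identity in spanning forest language and summing; the hard step is the search itself, and the cleanness of the affine parameterization in the statement suggests that the eight-dimensional syzygy output of that search admits a compact uniform presentation.
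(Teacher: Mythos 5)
Your reduction of the theorem to a base identity plus eight syzygies is exactly the paper's first step (Proposition \ref{free vars}): since the right-hand side is affine in the $x_i$, it suffices to check that each coefficient vanishes, and these coefficients are indeed consequences of quadratic Dodgson identities on an auxiliary graph (the catalogue in Lemma \ref{Dodgsons in AB}, each an instance of \eqref{Dodgson c}). Your expansion of the $x_i=0$ specialization also agrees with the paper's \eqref{emp1}. The gap is in your plan for the base identity. You propose to exhibit $P\cdot(1,2,3,4)$ as a linear combination of classical quadratic identities found by enumerating them and solving a linear system. But no quadratic Dodgson identity on the enlarged graph can contain the term $P\cdot(1,2,3,4)$ at all: in the two-level identity \eqref{klevelmod} the two factors of any product are minors whose sizes differ by at most $2$, whereas on the auxiliary graph $P=\Psi^{123,123}$ and $(1,2,3,4)=\Psi_{123}$ differ by three removed rows and columns. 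The cubic Jacobi identity does contain this pair, but only in the cubic term $P\,(1,2,3,4)^2$, which cannot be reached by a \emph{linear} combination of quadratic relations. So the linear system you propose has no solution --- the target monomial never occurs in the relations you are combining. This is precisely the obstruction the introduction flags.

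The missing idea is to prove the identity for the \emph{square}. The three-level Jacobi identity (Corollary \ref{jacobi} with $k=3$) expresses $(1,2,3,4)^2 P$ as a $3\times 3$ determinant in the $A_i$ (Lemma \ref{apply Jacobi}); separately, each product $PA_i$ is computed as a quadratic in the $B_k$ via \eqref{klevelmod} (Lemma \ref{PAi}). Multiplying the Jacobi expression by $P$, substituting, and simplifying with the Dodgson relations of Lemma \ref{Dodgsons in AB} yields the factorization $\big(P\,(1,2,3,4)\big)^2 = E(0,\ldots,0)\,E(0,1,0,1,1,1,1,1)$ (Lemma \ref{main calc}). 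Only now does the $x_i$-independence you established enter: it identifies the two factors on the right as the same polynomial $E$, giving $E=\pm P\,(1,2,3,4)$, and the sign is fixed because both $P$ and $(1,2,3,4)$ are Kirchhoff polynomials with nonnegative coefficients while \eqref{emp1} is a positive combination. Without this squaring-and-square-root step your outline cannot close.
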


This is the generalization of the classical Dodgson identity phrased in terms of spanning forest polynomials.  It is possible to give a graphical representation of this identity like in equation \eqref{3 and 1 id} but it would take too much space.
\begin{proof}[Outline of proof]
Here we will describe the structure of the proof and the necessary calculations will be carried out in the results which follow this outline.

Let $E(x_1, x_2, x_3, x_4, x_5, x_6, x_7, x_8)$ be the right hand side of \eqref{emp formula}.  We first check that $E$ does not depend on the values of the $x_i$ by checking that the coefficient of each $x_i$ in $E$ is zero (Proposition \ref{free vars}).  Now we are free to use any choice of $x_i$ which is algebraically convenient.

Next, using the classical Jacobi identity on an auxiliary graph with three extra edges we obtain an expression for
\[
  (1,2,3,4)^2(1,1,1,1) 
\]
which is a linear combination of products of the form $\Ai\Aj$ (Lemma \ref{apply Jacobi}).

Then we calculate each $P\Ai$ as a linear combination of products of the form $\Aj\Bk$ (Lemma \ref{PAi}).
Using this calculation we obtain an expression for $\big((1,2,3,4)(1,1,1,1)\big)^2$, which we can then check is the same as $E(0,0,0,0,0,0,0,0)E(0,1,0,1,1,1,1,1)$ (Lemma \ref{main calc}).

The proof of the theorem concludes by checking the sign.
\end{proof}

Let us pause here for a brief word on the role of the $x_i$.  The Dodgson identities give a number of quadratic identities between the $\Ai$ and $\Bi$.  Consequently there cannot be a unique way to write $(1,1,1,1)(1,2,3,4)$ as a linear combination of products $\Ai\Bk$.  The $x_i$'s describe this nonuniqueness.  We can specialize to get more manageable equations, for example setting all $x_i=0$ and
collecting terms gives
\[
  (1,2,3,4)(1,1,1,1) = (1,2,3,1)(1,-,1,2) + (1,2,3,2)(-,1,1,2) + (1,2,3,3)(-,-,1,2)
\]
but no such specialization is canonical, so we gave the general equation in Theorem \ref{main thm}.


\subsection{Preliminary results}

Now we can proceed with the lemmas.
We will need a particular form of the Jacobi determinantal identity and some further Dodgson identities which follow from it.

Let $M$ be an $n\times n$ matrix. 
Let $I$ and $J$ be subsets of $\{1,2,\ldots, n\}$.
Let $M(I,J)$ be the matrix obtained from $M$ by removing rows $I$ and columns $J$. Similarly let $M[I,J]$ be the matrix where we only keep rows $I$ and columns $J$. Finally we let
\[
   s(I,J) = \sum_{x \in I}x + \sum_{x \in J}x.
\]

\begin{thm}\label{thm jacobi}Let $M$ be a nonsingular $n \times n$ matrix and let $I$ and $J$ be two sets in $\{1,2,\ldots,n\}$ with $|I|=|J|=t$. Let $A = \adj M$ and define the matrix $B$ by $b_{ij} = \det(M(i,j))$.  Then 
\[
   \det (B[I,J]) =(\det M)^{t-1} \det (M(I,J))
\]
\end{thm}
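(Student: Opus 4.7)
The plan is to deduce the statement from the classical Jacobi identity on the adjugate matrix $A = \adj M$, and then convert from $A$ to $B$ by tracking the signs $(-1)^{i+j}$ that distinguish the two matrices.

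First, I would establish the adjugate version of the identity, namely $\det(A[I,J]) = (\det M)^{t-1}(-1)^{s(I,J)} \det(M(J,I))$. A block-matrix proof works cleanly: after permuting rows and columns of $M$ so that $I$ and $J$ occupy positions $1,\ldots,t$, write the permuted matrix in $2\times 2$ block form with top-left block $P$ of size $t\times t$ and bottom-right block $S$ of size $(n-t)\times(n-t)$, calling the off-diagonal blocks $Q$ and $R$. The identity $(\adj M)\cdot M = (\det M) I_n$, restricted to the top-left block, gives $A_{11}(P - QS^{-1}R) = (\det M)\, I_t$, assuming $S$ is invertible (which can be forced by a perturbation and continuity). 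Combining with the Schur complement identity $\det M = (\det S)\cdot\det(P - QS^{-1}R)$ yields $\det(A_{11}) = (\det M)^{t-1}\det S$, and unwinding the row and column permutations introduces precisely the sign $(-1)^{s(I,J)}$.

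Second, I would translate the statement from $A$ to $B$. Since $A_{ij} = (-1)^{i+j}\det(M(j,i)) = (-1)^{i+j}B_{ji}$, listing $I = \{i_1<\cdots<i_t\}$ and $J = \{j_1<\cdots<j_t\}$ yields the factorization $A[I,J] = D_I\,(B[J,I])^T\,D_J$, where $D_I$ and $D_J$ are diagonal sign matrices with entries $(-1)^{i_k}$ and $(-1)^{j_\ell}$. Taking determinants gives $\det(A[I,J]) = (-1)^{s(I,J)}\det(B[J,I])$. Matching this with the adjugate version cancels the two $(-1)^{s(I,J)}$ factors, and swapping the labels $I\leftrightarrow J$ gives the desired $\det(B[I,J]) = (\det M)^{t-1}\det(M(I,J))$.

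The main obstacle is sign bookkeeping. The adjugate transposes indices and carries $(-1)^{i+j}$ signs, and the reduction to $I=J=\{1,\ldots,t\}$ introduces additional parity signs from the row and column permutations, so several $s(I,J)$ contributions must be combined without error. Before committing to the general argument I would verify the cancellation on a small example such as a $3\times 3$ matrix with $|I|=|J|=2$, to head off the classic off-by-one pitfalls that appear when manipulating $s(I,J)$.
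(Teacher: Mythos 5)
Your proposal is correct, but it reaches Jacobi's identity by a different road than the paper. The paper follows the classical column-replacement argument: it forms the matrix $C$ obtained from $M$ by replacing each column $j_k\in J$ with the standard basis vector $\mathbf{e}_{i_k}$, computes $AC=D$ where $D$ has columns $(\det M)\mathbf{e}_j$ off $J$ and $A\mathbf{e}_{i_k}$ on $J$, and extracts the identity by taking determinants of $AC=D$; the signs $(-1)^{s(I,J)}$ appear once in $\det C$ and once in the passage from $A[J,I]$ to $B[I,J]$, and cancel. You instead permute $I$ and $J$ to the leading positions, read off $A_{11}(P-QS^{-1}R)=(\det M)I_t$ from the block form of $(\adj M)M=(\det M)I_n$, and combine with the Schur complement formula $\det M=(\det S)\det(P-QS^{-1}R)$. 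Both routes rest on the same engine, the adjugate relation, and your translation $A[I,J]=D_I(B[J,I])^TD_J$ giving $\det(A[I,J])=(-1)^{s(I,J)}\det(B[J,I])$ is exactly the sign bookkeeping the paper performs in its computation of $\det D$. The trade-off: the paper's version needs no invertibility of any submatrix and no permutation of $M$, so all signs are localized in two explicit determinant evaluations, whereas your version requires the auxiliary hypothesis that $S$ be invertible (correctly discharged by perturbation and continuity, since both sides are polynomial in the entries of $M$) and must track the additional parity contributed by the row and column permutations; in exchange you get the Schur complement mechanism, which many readers find more transparent than the $AC=D$ computation. Your instinct to sanity-check the sign cancellation on a $3\times 3$ example with $t=2$ is well placed, as that is the only genuinely delicate point, but the cancellation does go through: the two factors of $(-1)^{s(I,J)}$ from the adjugate identity and from the diagonal sign matrices cancel, leaving the sign-free statement for $B$.
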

\begin{proof}
To remain self contained we will give a proof following the idea of the proof of Lemma 28 of \cite{Brbig}.
Let $I_n$ be the $n\times n$ identity matrix.  Then
\[
 A M = I_n (\det M)
\]
Take determinants to get
\[
  \det(A) = \det M^{n-1}.
\]
Now if the $k$-th element of $I$ is $i_k$ and the $k$-th element of $J$ is $j_k$ let $C$ be $M$ with the $j_k$ column replaced by $\mathbf{e}_{i_k}$, where $\mathbf{e}_i$ is the $i$-th standard basis element of $\mathbb{R}^n$. Then multiplying out column by column we get that $AC$ is the matrix $D$ whose $j$-th column is
\[
\begin{cases}
  (\det M)\mathbf{e}_j & \text{if $j$ is not in $J$} \\
  A\mathbf{e}_{i_k} & \text{if $j$ is $j_k$ in $J$}
  \end{cases}
\]
Now notice that
\[
\det C = (-1)^{s(I,J)} \det(M(I,J))
\]
and
\begin{align*}
\det D & = (\det M)^{n-t} \det(A[J,I]) \\
& = (\det M)^{n-t}\det(B[I,J]) (-1)^{s(I,J)}.
\end{align*}
The second equality holds since $A[J,I]$ can be converted to $B[I,J]$ by multiplying each row and each column which had an odd index in $M$ by $-1$ and then taking a transpose; on determinants this changes the sign $s(I,J)$ times.

Finally, taking the determinant of $AC=D$, using the above calculations and dividing by $(\det M)^{n-t}$ gives us the result.\end{proof}

This formula can readily be translated into the Dodgson polynomials language.

\begin{cor}\label{jacobi} Let $G$ be a graph and $M$ be its associated matrix. Let $I$, $J$ and $E$ be subsets of the edges, such that $|I|=|J|=k$. Then the $k$-level Dodgson identity is
\[
   \det\left(\Psi_{G,E}^{I_i,J_j}\right)_{1\leq i,j\leq k} = \Psi_{G,E}^{I,J}(\Psi_{G,E}^{})^{k-1}
\]
where $I=\{I_1, \ldots, I_k\}$ and $J=\{J_1, \ldots, J_k\}$.
\end{cor}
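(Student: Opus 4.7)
My plan is to deduce the corollary as an essentially direct translation of Theorem \ref{thm jacobi} into the Dodgson polynomial language, applied to the auxiliary matrix $M_{G,E}$ obtained from $M_G$ by setting $\alpha_i = 0$ for all $i \in E$.

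First I would observe that the definitions line up cleanly: by definition $\det M_{G,E} = \Psi_{G,E}$, and for any pair of edge subsets $I', J'$ with $|I'| = |J'|$, the minor $\det M_{G,E}(I', J')$ is exactly $\Psi_{G,E}^{I',J'}$. Thus if we apply Theorem \ref{thm jacobi} with the matrix $M_{G,E}$ and the row/column index sets corresponding to the edges of $I$ and $J$, the entries of the matrix $B[I,J]$ from the theorem are precisely $b_{I_i, J_j} = \det M_{G,E}(I_i, J_j) = \Psi_{G,E}^{I_i, J_j}$. The conclusion of the theorem, $\det(B[I,J]) = (\det M_{G,E})^{k-1} \det M_{G,E}(I,J)$, then reads exactly as the claimed identity.

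The one subtlety is the nonsingularity hypothesis in Theorem \ref{thm jacobi}. I would resolve this by working in the polynomial ring $\mathbb{Z}[\alpha_i : i \notin E]$, which is an integral domain. Both sides of the desired identity are polynomials in these variables, and we already know from Theorem \ref{thm jacobi} that the identity holds whenever the numerical matrix $M_{G,E}$ is nonsingular. Since the set of specializations where $\det M_{G,E} = \Psi_{G,E}$ is nonzero is Zariski dense whenever $\Psi_{G,E}$ is not the zero polynomial (equivalently, whenever $G$ has a spanning tree avoiding $E$), the polynomial identity extends to all values. In the degenerate case $\Psi_{G,E} \equiv 0$, one can alternatively inspect the unnormalized version $(\det M_{G,E})^{n-1} \det M_{G,E}(I,J) = (\det M_{G,E})^{n-k} \det B[I,J]$ produced in the proof of Theorem \ref{thm jacobi} before the division, which is a polynomial identity with no hypothesis, and cancel the common factor $(\Psi_{G,E})^{n-k}$ in the integral domain.

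No step here is genuinely difficult; the main thing to verify with care is that the edge-indexing conventions in the definition of $\Psi_{G,E}^{I,J}$ match the row/column indexing in Theorem \ref{thm jacobi}. Since $M_G$ has the diagonal variable $\alpha_i$ in the $i$-th row and column, each edge canonically labels one row and one column, and removing rows and columns of $M_{G,E}$ indexed by the edges of $I$ and $J$ is exactly what the Dodgson polynomial definition prescribes, so the translation is unambiguous.
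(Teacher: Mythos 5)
Your proposal is correct and follows essentially the same route as the paper: apply Theorem \ref{thm jacobi}, identify $\det M = \Psi_G$, $\det M(I,J) = \Psi_G^{I,J}$, and $B[I,J]_{ij} = \Psi^{I_i,J_j}$, then set $\alpha_e = 0$ for $e \in E$. Your extra care in discharging the nonsingularity hypothesis via a Zariski-density/polynomial-identity argument is a welcome addition that the paper's proof leaves implicit.
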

\begin{proof}Use Theorem \ref{thm jacobi}. By definition $\det M = \Psi_G$ and $\det M(I,J) = \Psi_G^{I,J}$. Now $B[I,J]_{ij} = \det(M(I_i,J_j)) = \Psi^{I_i,J_j}$. Finally we set $\alpha_e = 0$ for $e\in E$.

\end{proof}
Careful book-keeping and application of the above identity yield the following corollary.

\begin{cor}\label{second cor}Let $M$ be an associated matrix for the graph $G$. Let $E$, $I$, $J$, $A$ and $B$ be ordered sets indexing edges in $G$, such that $|A\cap I|=|B \cap J| = 0$, $|I|=|J|=k$ and $|A|=|B|=l$. Then the modified $k$-level Dodgson identity is
\begin{equation}
   \det\left(\Psi_{G,E}^{A\cup I_i,B\cup J_j}\right)_{1\leq i,j\leq k} = \Psi_{G,E}^{A\cup I,B\cup J}\left(\Psi_{G,E}^{A,B}\right)^{k-1}
   \label{klevelmod}
\end{equation}
where $I=\{I_1, \ldots, I_k\}$ and $J=\{J_1, \ldots, J_k\}$.
\end{cor}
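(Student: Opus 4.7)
The plan is to reduce \eqref{klevelmod} to the ordinary $k$-level Dodgson identity of Theorem \ref{thm jacobi}, applied not to $M$ itself but to the smaller auxiliary matrix
\[
  N := M(A, B)_E,
\]
obtained from $M$ by setting $\alpha_e = 0$ for each $e \in E$ and then deleting the rows indexed by $A$ and the columns indexed by $B$. Since $|A \cap I| = |B \cap J| = 0$ by hypothesis, each index $I_i$ still labels a valid row of $N$ and each $J_j$ a valid column, so $I$ and $J$ continue to be meaningful as row- and column-index sets for $N$.

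The core step is the minor-of-a-minor observation: for any $i, j$,
\[
  N(I_i, J_j) = M(A \cup I_i,\, B \cup J_j)_E, \qquad N(I, J) = M(A \cup I,\, B \cup J)_E,
\]
so that, taking determinants and recognizing the Dodgson polynomials,
\[
  \det N = \Psi_{G,E}^{A,B}, \quad \det N(I_i, J_j) = \Psi_{G,E}^{A \cup I_i,\, B \cup J_j}, \quad \det N(I, J) = \Psi_{G,E}^{A \cup I,\, B \cup J}.
\]
Applying Theorem \ref{thm jacobi} to $N$ with row- and column-deletion sets $I$ and $J$ yields $\det(\det N(I_i, J_j))_{i,j} = (\det N)^{k-1}\,\det N(I, J)$, which becomes exactly \eqref{klevelmod} after substituting the expressions above.

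The main obstacles are technical rather than conceptual. Theorem \ref{thm jacobi} requires $N$ to be nonsingular, but $M$ is naturally a matrix over the polynomial ring $\mathbb{Z}[\alpha_1, \dots, \alpha_n]$, so $N$ is generically invertible and \eqref{klevelmod} holds as a polynomial identity in the $\alpha_i$, which then remains valid after the specialization $\alpha_e = 0$ for $e \in E$. Second, $\Psi^{I,J}$ depends up to sign on the orderings of $I$ and $J$, so one should verify that the extended orderings of $A \cup I_i$, $B \cup J_j$, $A \cup I$, $B \cup J$ are compatible across both sides of \eqref{klevelmod}; since $A$ and $B$ occupy fixed initial positions throughout, the shifts contribute matching signs on both sides and so cancel. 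Beyond this bookkeeping there is no new content, and the proof reduces to a one-line identification of determinants once $N$ is in place.
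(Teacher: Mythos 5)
Your proof is correct and follows the route the paper intends: the paper offers no written proof of Corollary \ref{second cor} beyond the remark that it follows from the preceding Jacobi/Dodgson identity by careful book-keeping, and applying Theorem \ref{thm jacobi} to the submatrix $N = M(A,B)_E$ (whose further minors are exactly the Dodgson polynomials $\Psi_{G,E}^{A\cup I_i,\,B\cup J_j}$) is precisely that book-keeping. The only nit is that when $\det M(A,B)$ vanishes identically as a polynomial in the $\alpha_i$ the phrase ``$N$ is generically invertible'' is not quite enough, and one should instead invoke the Jacobi identity as a polynomial identity in the entries of an arbitrary square matrix (valid by Zariski density of the nonsingular locus) before specializing; this is standard and does not affect correctness.
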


Note that when $k=2$ this gives the classical Dodgson identity.

We will use the following rearrangement of the $k=2$ case.
\begin{prop}[Brown, \cite{Brbig}]
  Let $I$ and $J$ be subsets of edges of $G$ with $|J|= |I|+1$.  Let $a$, $b$, $x$ be edges indices with $a\not\in I$, $b,x\not\in I\cup J$, and $x<a<b$.  Let $S=I\cup J \cup \{a,b,x\}$.  Then
  \begin{equation}\label{Dodgson c}
    \Psi^{Ia, J}_S\Psi^{Ibx,Jx}_S - \Psi^{Iax,Jx}_S\Psi^{Ib,J}_S = \Psi^{Ix,J}_S\Psi^{Iab,Jx}_S
  \end{equation}
\end{prop}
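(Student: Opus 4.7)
The plan is to recognise \eqref{Dodgson c} as a Plücker-type three-term relation among the minors of a single tall matrix, and then deduce that relation from the classical ($k=2$) Dodgson identity applied to a square augmentation of the tall matrix.

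Let $N = M(I,J)_S$ denote the submatrix of $M$ with rows indexed by $I$ and columns indexed by $J$ removed, and with $\alpha_e$ set to zero for every $e\in S$.  Since $|J|=|I|+1$, this $N$ is a rectangular $(p+1)\times p$ matrix with $p=m+n-|J|$.  Because $a,b,x\notin I$ the rows labelled $a$, $b$, $x$ are all present in $N$, and because $x\notin J$ the column labelled $x$ is also present.  Writing $N(R;C)$ for the submatrix of $N$ with rows $R$ and columns $C$ deleted, each of the six Dodgson polynomials appearing in \eqref{Dodgson c} is a minor of this single matrix $N$:
\[
\Psi^{Ir,J}_S = \det N(r;\emptyset)\quad\text{for } r\in\{a,b,x\},\qquad
\Psi^{Irs,Jx}_S = \det N(\{r,s\};\{x\})\quad\text{for } \{r,s\}\subset\{a,b,x\}.
\]
Consequently \eqref{Dodgson c} is equivalent to the purely linear-algebraic identity
\[
\det N(a)\det N(\{b,x\};x)-\det N(b)\det N(\{a,x\};x)=\det N(x)\det N(\{a,b\};x).
\]

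To prove this identity I would augment $N$ to a square matrix.  Let $P$ be the $(p+1)\times(p+1)$ matrix formed by appending to $N$ one additional column, namely the standard basis vector with a $1$ in the row labelled $x$ and $0$ elsewhere.  Now apply the classical Dodgson identity (the $k=2$ case of Corollary \ref{second cor}, specialised to a general square matrix such as $P$) with rows $\{a,b\}$ and columns $\{x,\text{appended}\}$ deleted.  Each of the four resulting minors of $P$ reduces to a minor of $N$ by a single Laplace expansion along the appended column: deleting the appended column from $P$ returns $N$, so the minors involving removal of the appended column are exactly $\det N(r)$ and $\det N(\{a,b\};x)$; deleting column $x$ alongside a row $r\in\{a,b\}$ leaves the appended column supported only on the surviving row labelled $x$, giving $\pm\det N(\{r,x\};x)$; and $\det P$ itself equals $\pm\det N(x)$.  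Substituting these evaluations into the Dodgson identity for $P$ yields exactly the three-term relation displayed above.

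The only delicate step is the sign bookkeeping.  The two Laplace signs appearing in the evaluations of $\det P(a;x)$ and $\det P(b;x)$, together with the Laplace sign in $\det P$ and the fixed sign in the Dodgson identity, must combine to produce the precise $+,-,+$ pattern on the right-hand side of \eqref{Dodgson c} rather than some other combination.  The hypothesis $x<a<b$, which fixes the position of the distinguished row $x$ inside the ordered triple $(x,a,b)$, is exactly what is needed for these signs to cancel correctly, so that no extraneous sign appears in the final statement.  This verification is routine once the setup above is in place.
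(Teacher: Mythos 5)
Your argument is correct, but it takes a genuinely different route from the paper's. The paper simply cites Brown's equation (23) and sketches the proof as three applications of the $k=2$ case of \eqref{klevelmod} followed by algebraic rearrangement, staying entirely at the level of Dodgson polynomials. You instead observe that all six polynomials in \eqref{Dodgson c} are minors of the single tall matrix $N=M(I,J)_S$, so that the identity is a three-term Grassmann--Pl\"ucker relation among its maximal and next-to-maximal minors, and you derive that relation from a \emph{single} application of the classical Dodgson identity to the square matrix $P$ obtained by bordering $N$ with the column $\mathbf{e}_x$. I checked the sign bookkeeping you deferred, and it does close up: writing $\rho_x<\rho_a<\rho_b$ for the positions of rows $x$, $a$, $b$ in $N$, the Laplace expansions along the appended column give $\det P=(-1)^{\rho_x+p+1}\det N(x)$ and $\det P(a,x)=(-1)^{\rho_x+p}\det N(\{a,x\};x)$, $\det P(b,x)=(-1)^{\rho_x+p}\det N(\{b,x\};x)$ --- the hypothesis $x<a<b$ is used precisely to ensure that deleting row $a$ or $b$ does not shift the position of row $x$, so the two latter signs agree --- and substituting into the Dodgson identity for $P$ with rows $\{a,b\}$ and columns $\{x,\text{appended}\}$ reproduces exactly the sign pattern of \eqref{Dodgson c}. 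Your version is more self-contained and arguably more conceptual, since it explains why the identity has exactly three quadratic terms and why the ordering hypothesis on $x,a,b$ appears; the paper's route has the advantage of never leaving the Dodgson-polynomial formalism and of matching the cited source.
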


\begin{proof}
  This is equation 23 from \cite{Brbig}; the proof proceeds by applying the $k=2$ case of \eqref{klevelmod} three times and rearranging.
\end{proof}

We only need the signs relating Dodgson polynomials to spanning forest polynomials in two cases, given in the next lemma.  The general formula is found in Proposition 16 of \cite{BrY}, but we give here a self contained proof of the cases we need.
\begin{lemma}\label{Lemma signs}
  Fix an order and orientation of the edges of a graph $G$.  Suppose edges $1$, $2$, and $3$ have a common vertex $v$.  Let $w_1$, $w_2$, and $w_3$ be distinct and be the other end points of $1$, $2$, and $3$, and let 
  \[
  \epsilon(i,j) = \begin{cases} 1 & \text{if $i$ and $j$ are both oriented into $v$ or both oriented out of $v$} \\
    -1 & \text{otherwise}
    \end{cases}
  \]
  for $i \neq j \in \{1,2,3\}$.
Then
  \[
    \Psi^{1,2} = \epsilon(1,2)\Phi^{\{v\}, \{w_1,w_2\}} 
  \]  
  and
  \[
    \Psi_k^{i,j} = \epsilon(i,j)(-1)^{i-j+1}\Phi^{\{v\}, \{w_i,w_j\}, \{w_k\}}
  \]
  where $\{i,j,k\} =  \{1,2,3\}$ in some order.
\end{lemma}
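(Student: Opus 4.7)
The plan is to apply Proposition~\eqref{Psi to Phi} to each identity and then pin down the resulting overall sign by Laplace-expanding the relevant minor of $M$.

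First, I will argue that a unique partition $P$ contributes to each sum. Proposition~\eqref{Psi to Phi} expresses $\Psi^{I,J}_K$ as a signed sum $\sum_P (\pm)\,\Phi^P_{G\backslash(I\cup J\cup K)}$ over partitions $P$ of the endpoints of $I\cup J\cup K$ for which some spanning forest $F$ realizes $P$ with both $I\cup K\cup F$ and $J\cup K\cup F$ spanning trees. Counting edges shows $F$ must have $1+|I|+|K|$ components. For $\Psi^{1,2}$ this is $2$, and the two spanning-tree conditions force $v$ and $w_1$, as well as $v$ and $w_2$, into different components, so $P=\{v\}\,|\,\{w_1,w_2\}$ is the unique possibility. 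For $\Psi_k^{i,j}$ the component count is $3$, and the conditions force each of the triples $v,w_i,w_k$ and $v,w_j,w_k$ to occupy three distinct components, pinning down $P=\{v\}\,|\,\{w_i,w_j\}\,|\,\{w_k\}$ uniquely.

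For the sign, set $\sigma_\ell\in\{\pm 1\}$ to be $+1$ iff edge $\ell\in\{1,2,3\}$ is oriented out of $v$, so that $\epsilon(i,j)=\sigma_i\sigma_j$. Because edges $1,2,3$ share the vertex $v$, whose row was struck from $\widehat E$, the row of $M$ indexed by edge $\ell\in\{1,2,3\}$ has in its $\widehat E^T$-block a single nonzero entry $\sigma_\ell$ at column $w_\ell$; dually, column $\ell$ has in its $-\widehat E$-block a single nonzero $-\sigma_\ell$ at row $w_\ell$. In $M(1,2)$, removing column $2$ kills the $A$-diagonal at edge $2$, so row $2$ has only the single entry $\sigma_2$ at column $w_2$; symmetrically column $1$ has only the single entry $-\sigma_1$ at row $w_1$. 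Laplace-expanding $\det M(1,2)$ along row $2$ and then along column $1$ peels off the factor $-\sigma_1\sigma_2$ together with two position signs, leaving a reduced minor which I will identify (by a bijection argument of the same flavor as Proposition~\eqref{Psi to Phi}) with $\pm\,\Phi^{\{v\},\{w_1,w_2\}}_{G\backslash\{1,2\}}$; careful sign bookkeeping shows the position signs cancel against the residual matrix-tree sign, leaving overall sign $\sigma_1\sigma_2=\epsilon(1,2)$.

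For $\Psi_k^{i,j}$ the same strategy applies, but now $\alpha_k=0$ additionally kills the $A$-diagonal at edge $k$, so both row $k$ and column $k$ in $M(i,j)_k$ become sparse. Expanding successively along row $j$, column $i$, row $k$, and column $k$, the product of the four sparse entries equals $\sigma_j\cdot(-\sigma_i)\cdot\sigma_k\cdot(-\sigma_k)=\sigma_i\sigma_j=\epsilon(i,j)$, with the contributions from the shared edge $k$ squaring to $1$. The reduced minor identifies with $\pm\,\Phi^{\{v\},\{w_i,w_j\},\{w_k\}}_{G\backslash\{1,2,3\}}$, and the accumulated position signs from the four expansions, together with this matrix-tree sign, contribute the extra factor $(-1)^{i-j+1}$: swapping edge $i$ for edge $j$ shifts the sorted position of the fixed edge $k$ by an amount that is odd iff $k$ lies strictly between $i$ and $j$ in the natural order. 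The main obstacle will be this position-sign bookkeeping in the second identity, where the contribution must be verified to equal $(-1)^{i-j+1}$ for each of the three relative orderings of $i$, $j$, $k$; the first identity serves as a simpler warm-up for the same technique, and the symmetry $(-1)^{i-j+1}=(-1)^{|i-j|+1}$ in $i\leftrightarrow j$ provides a useful consistency check.
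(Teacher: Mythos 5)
Your structural setup is sound and genuinely different from the paper's: you delete the row of $v$ from $\widehat{E}$ and Laplace-expand $M(i,j)_k$ along the resulting sparse edge-rows and edge-columns, whereas the paper keeps $v$'s row (deleting a vertex $x\notin\{v,w_i,w_j\}$), reduces to the coefficient of a single well-chosen monomial via \eqref{Psi to Phi}, and evaluates that coefficient as a product of two incidence-matrix determinants expanded along their inserted columns. Your uniqueness-of-partition argument is correct and matches the paper's. One point you should make explicit is that the sign of $\det M(I,J)_K$ does not change when you switch which vertex row of $E$ is deleted; the two choices of $\widehat{E}$ differ by a determinant-$\pm 1$ row operation applied simultaneously to the $-\widehat{E}$ block and the $\widehat{E}^T$ block, so the effect on the determinant squares to $1$. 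Without this, your computation proves the lemma only for the matrix $M$ in which $v$'s row was struck.

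The genuine gap is that the sign --- which is the entire content of the lemma --- is never actually computed. For $\Psi^{1,2}$ your peeled entries multiply to $-\sigma_1\sigma_2$, so the position signs together with the residual sign must contribute an overall $-1$ to reach $\epsilon(1,2)$; yet you describe them as ``cancelling,'' which taken literally yields $-\epsilon(1,2)$. Either there is a sign error or the bookkeeping is silently doing the real work. Moreover, the reduced minor is \emph{not} of the symmetric form $\left[\begin{smallmatrix} A & B^T \\ -B & 0 \end{smallmatrix}\right]$: after your expansions the two incidence blocks have \emph{different} vertex rows deleted ($\{v,w_2\}$ versus $\{v,w_1\}$ in the first case, $\{v,w_j,w_k\}$ versus $\{v,w_i,w_k\}$ in the second), so each forest contributes a product of two independent $\pm 1$ determinants, and you must prove this product is the same for every contributing forest and then evaluate it. This is exactly where the paper expends its effort, by choosing a monomial whose $w_iw_j$-tree meets the deleted vertex so that only the row-$v$ cofactor survives in each factor. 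Finally, the factor $(-1)^{i-j+1}$ is supported only by a heuristic about whether $k$ lies between $i$ and $j$, and you explicitly defer its verification. Until the position signs, the per-forest residual sign, and the missing $-1$ in the first identity are pinned down for each of the three relative orderings of $i$, $j$, $k$, the lemma is not proved.
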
 

\begin{proof}
  The first statement of the lemma follows from the second with $k=3$ applied to the graph $G$ with a new vertex $w_3$ added and a new edge $3$ from $v$ to $w_3$. Consider the second statement.  Let $x$ be the vertex which was removed when forming $M$.  We choose it to be disjoint from $\{v, w_i, w_j\}$.

  Note that $\{v\},\{w_i,w_j\},\{w_k\}$ is the only set partition compatible with $\Psi_k^{i,j}$.  From the observations preceding this lemma, if $\Psi^{i,j}_k = 0$ then there are no common spanning trees of $G\backslash i/\{j,k\}$ and $G\backslash j/\{i,k\}$ and so in particular there are no terms in $\Phi^{\{v\}, \{w_i,w_j\}, \{w_k\}}$.  Thus 
\[
\Psi^{i,j}_k = 0 \quad \Leftrightarrow \quad \Phi^{\{v\}, \{w_i,w_j\}, \{w_k\}}=0.
\]

  By \eqref{Psi to Phi} we know that $\Psi^{i,j}_k = f\Phi^{\{v\}, \{w_i,w_j\}, \{w_k\}}$ for some $f\in\{-1,1\}$, so it suffices to consider one term of $\Psi^{i,j}$. Pick a term $t$ where the tree out of $w_i$ and $w_j$ intersects $x$. Let $F$ be the forest corresponding to $t$.  The sign of $t$ in $\Psi^{i,j}$ is $\det N$ where
\[
N = \begin{bmatrix} 0 & \widehat{E}[\{i,k\}\cup F]^T \\ -\widehat{E}[\{j,k\}\cup F] & 0 \end{bmatrix}
\]

Let $B = \widehat{E}[\{k\}\cup F]$. Then $\widehat{E}[\{i,k\}\cup F]$ and $\widehat{E}[\{j,k\}\cup F]$ are formed by inserting the $i$th and $j$th columns respectively of $\widehat{E}$ into $B$.  If $\{i,j\} = \{1,2\}$ the insertions are both made in the first column.  Let $i'$ be the index of the inserted column $i$ and $j'$ the index of the inserted column in $j$.  Thus if $\{i,j\} = \{1,2\}$ then $i'=j'=1$; if $\{i,j\} = \{1,3\}$ then $\{i', j'\} = \{1,2\}$; and  if $\{i,j\} = \{2,3\}$ then $i'=j'=2$.

Consider $B$ with the row corresponding to $v$ removed.  This is the same as the columns corresponding to edges of $\{k\}\cup F$ in the incidence matrix of the graph with $v$ and $x$ identified.  This has determinant $\pm 1$ since $\{k\}\cup F$ was chosen to be a tree in this graph.  Likewise, removing the row corresponding to $w_1$ or $w_2$ we get a zero determinant since $\{k\}\cup F$ is not a tree in the graph with $w_1$ or $w_2$ identified with $x$.  

Thus if we expand $\det \widehat{E}[\{i,k\}\cup F]$ down the inserted column, only the cofactor coming from row $v$ is retained, and likewise for $\widehat{E}[\{j,k\}\cup F]$.  Thus
\begin{align*}
\det N & = \det(\widehat{E}[\{i,k\}\cup F])\det(\widehat{E}[\{j,k\}\cup F]) \\
& = e_{i,\ell} e_{j,\ell} (-1)^{i'+j'+2\ell} \det(\widehat{B})^2 \\
& = \epsilon(i,j)(-1)^{i-j+1}
\end{align*}
where $\ell$ is the index of row $v$,  $\widehat{B}$ is $B$ with row $v$ removed and $e_{r,s}$ is the $(r,s)$ entry of $\widehat{E}$.
\end{proof}


\subsection{Results for the main argument}

Here is a catalogue of the instances of the Dodgson identity which we will need in the main argument, written in terms of the $\Ai$ and $\Bi$ from Definition \ref{A and B}.
\begin{lemma}\label{Dodgsons in AB}
  \begin{align}
    \Aa(\Bc+\Bg) + \Ab(\Bg-\Be) - \Ad(\Ba+\Be) & = 0 \label{partx1} \\
    \Aa(\Bd+\Bg) + \Ae(\Bg-\Be) - \Ac(\Bb+\Be) & = 0 \label{x5}\\
    \Ab(\Bb+\Bg) + \Aa(\Bg-\Bf) - \Ad(\Ba+\Bf) & = 0 \label{partx2} \\
    \Ab(\Bd+\Bg) + \Af(\Bg-\Bf) - \Ac(\Bc+\Bf) & = 0 \label{x7}\\
    \Ac(\Bb+\Bf) + \Aa(\Bf-\Bg) - \Ae(\Ba+\Bg) & = 0 \label{x3}\\
    \Ac(\Bc+\Be) + \Ab(\Be-\Bg) - \Af(\Ba+\Bg) & = 0 \label{x4}\\
    \Ad(\Bd+\Be) + \Ae(\Be-\Bg) - \Af(\Bb+\Bg) & = 0 \label{formain} \\
    \Ad(\Bd+\Bf) + \Af(\Bf-\Bg) - \Ae(\Bc+\Bg) & = 0 \label{x8}\\
    \Ae(\Bc+\Be) + \Ad(\Be-\Bf) - \Af(\Bb+\Bf) & = 0 \label{x6}
  \end{align}
\end{lemma}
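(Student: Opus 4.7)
Plan: Each of the nine identities should be derivable as a Dodgson-type determinantal identity applied to the graph $G$ enlarged by a small collection of auxiliary edges among the marked vertices $v_1, v_2, v_3, v_4$, translated to spanning forest polynomial language via \eqref{Psi to Phi} and Lemma \ref{Lemma signs}. This follows the same template as the derivation of the 3-vertex identity \eqref{3 and 1 id} in section \ref{sec Dodgson id}, adapted to the 4-vertex setting.

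For each identity I would proceed as follows. First, identify auxiliary edges (typically two or three) whose endpoints lie in $\{v_1, v_2, v_3, v_4\}$, yielding an augmented graph $G'$. Second, apply an appropriate Dodgson identity --- either the classical $k=2$ case of Corollary \ref{second cor} or a rearrangement such as \eqref{Dodgson c} --- to $G'$ with the auxiliary edge indices distributed among $I$, $J$, and $K$, so that each resulting Dodgson polynomial depends only on the original graph $G$ together with the endpoints of the auxiliary edges. Third, use \eqref{Psi to Phi} to expand each Dodgson polynomial as a signed sum of spanning forest polynomials of $G$ whose partitions are set partitions of (a subset of) the marked vertices; after distributing any unconstrained marked vertex among the parts, each term becomes one of the $\Ai$'s, $\Bk$'s, or $P = \Psi_G$. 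Fourth, fix the signs using Lemma \ref{Lemma signs} where applicable and otherwise from a direct matrix-tree calculation, and rearrange to arrive at the target identity.

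Since there are only finitely many essentially different configurations of auxiliary edges on four marked vertices, each giving rise to a small collection of Dodgson-type identities, the task reduces to matching each of the nine identities to a specific configuration and a specific choice of $I$, $J$, $K$. The nine cases split into a few equivalence classes under the natural $S_4$ action permuting the marked vertices, so only a handful of base computations need to be carried out in detail, with the remaining identities following by relabeling.

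The principal obstacle is sign bookkeeping. Lemma \ref{Lemma signs} explicitly handles only the configuration of three edges at a common vertex, which is the natural setup for some of the nine identities; for configurations involving two disjoint auxiliary edges, one must compute the signs directly from the matrix-tree expansion of the signed incidence submatrix, following the template used in the proof of Lemma \ref{Lemma signs}. In addition, when a single Dodgson polynomial expands via \eqref{Psi to Phi} to a sum of several spanning forest polynomials --- as will typically happen when the auxiliary edges do not exhaust all four marked vertices --- one needs to check that after combining the six terms of the classical Dodgson identity, the sum collapses to the clean six-term form stated in the lemma, with no residual summands depending on $P$ or on other partitions outside the $\Ai$ and $\Bk$ lists.
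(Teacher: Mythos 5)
Your plan is essentially the paper's own proof: the paper reduces all nine identities to a single one (\eqref{x5}) by permuting the four marked vertices, adjoins three auxiliary edges at a common marked vertex, applies the rearranged Dodgson identity \eqref{Dodgson c} with a suitable choice of $x$, $a$, $b$, $I$, $J$, translates via Lemma \ref{Lemma signs} and \eqref{Psi to Phi} into sums of $\Ai$'s and $\Bk$'s, and rearranges. Your anticipated difficulties (sign bookkeeping and multi-term expansions of the less-constrained Dodgson polynomials) are exactly the points the paper handles, and its choice of three edges meeting at one vertex keeps everything within the scope of Lemma \ref{Lemma signs}.
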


\begin{proof}
The equations differ only by permuting the four marked vertices, so it suffices to prove \eqref{x5}.  Consider the graph
\[
\includegraphics{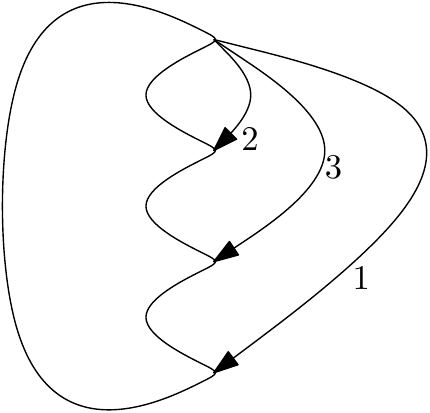}
\]
We use identity \eqref{Dodgson c} with $x=1$, $a=2$, $b=3$, $I = \emptyset$ and $J=\{2\}$, and by Lemma \ref{Lemma signs} we obtain
\[
(1,-,2,3)(1,2,2,-) - (1,-,2,-)(1,2,2,3) = (1,2,3,2)(1,-,2,2).
\]
For the sign of $(1,-,2,2)$ note that the cutting happens first so that edges $1$ and $3$ become adjacent columns in the cut matrix.
Expanding, $(1,-,2,3) = \Aa + \Ac + \Ae$, $(1,2,2,-) = \Bd+\Bg$, $(1,-,2,-) = \Bb + \Bd + \Be+\Bg$, and $(1,-,2,2)= \Bd + \Be$. We substitute these in and rearranging gives us equation \eqref{x5}.
\end{proof}

\begin{prop}\label{free vars}
  All the free variables in \eqref{emp formula} are explained by Dodgson identities.
\end{prop}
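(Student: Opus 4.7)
The plan is to verify that in the right--hand side $E(x_1,\ldots,x_8)$ of \eqref{emp formula}, the coefficient of each $x_i$ is a $\mathbb{Z}$--linear combination of the left--hand sides of the Dodgson identities in Lemma \ref{Dodgsons in AB}, and hence vanishes. Since $E$ is affine in the $x_i$ this will show that $E$ is independent of $x_1,\ldots,x_8$, which is the content of the proposition. Operationally I would read off each coefficient by scanning the bracketed factors, regroup the resulting sum as $\sum_{j,k} c_{jk}\,A_jB_k$, and match it term by term to the relevant identity (or short sum of identities).

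For each $i\in\{3,4,5,6,7,8\}$ I expect the coefficient of $x_i$ in $E$ to be precisely the left--hand side of equation \eqref{x3}--\eqref{x8} with the same index $i$; the naming conventions in Lemma \ref{Dodgsons in AB} already suggest this correspondence. As a representative case, the terms of $E$ whose bracketed factors contain $x_5$ contribute $-\Ac\Bb+\Aa\Bd-\Ac\Be-\Ae\Be+\Aa\Bg+\Ae\Bg$, which regroups as $\Aa(\Bd+\Bg)+\Ae(\Bg-\Be)-\Ac(\Bb+\Be)$, the left--hand side of \eqref{x5}. The remaining five singleton cases are entirely analogous and should be handled by the same mechanical grouping.

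The two cases that require a small linear combination, rather than a single identity, are $x_1$ and $x_2$; the labels \eqref{partx1} and \eqref{partx2} signal this. I anticipate that the coefficient of $x_2$ equals the sum of the left--hand sides of \eqref{partx2} and \eqref{x3}, the two $\Aa$ contributions $\Aa(\Bg-\Bf)+\Aa(\Bf-\Bg)$ cancelling; similarly the coefficient of $x_1$ should equal the sum of the left--hand sides of \eqref{partx1}, \eqref{x4} and \eqref{x6}, with the $\Ab$ contributions $\Ab(\Bg-\Be)+\Ab(\Be-\Bg)$ from \eqref{partx1} and \eqref{x4} cancelling and the surviving $\Ad$, $\Af$, $\Ae$ terms merging to reproduce the observed $\Ad(\Ba+\Bf)$, $-\Ad(\Bb+\Bf)$, $\Af(\Ba+\Bg)$ products.

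The only real obstacle is bookkeeping: because several identities in Lemma \ref{Dodgsons in AB} share products of the form $\Ai(\Bj-\Bk)$, it is easy to overshoot or undershoot by one identity when matching the $x_1$ or $x_2$ coefficient. I would therefore organize the verification as a table listing, for each $x_i$, the twelve or so surviving products with their signs, then compare this table against the analogous tables for the candidate identities; the cancellations among the ``linker'' terms $\Aa(\Bg-\Bf)$, $\Ab(\Bg-\Be)$, etc.\ provide a consistency check. Once all eight coefficients are shown to vanish in this way, the proposition follows at once.
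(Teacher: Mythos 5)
Your proposal is correct and follows essentially the same route as the paper: the coefficients of $x_3,\ldots,x_8$ are each the left-hand side of the correspondingly numbered identity in Lemma \ref{Dodgsons in AB}, the coefficient of $x_2$ is the sum of the left-hand sides of \eqref{partx2} and \eqref{x3}, and the coefficient of $x_1$ is the sum of the left-hand sides of \eqref{partx1}, \eqref{x4} and \eqref{x6}, exactly as the paper asserts. Your worked $x_5$ case and the cancellation pattern of the ``linker'' terms check out; the only blemish is the stray $-\Ad(\Bb+\Bf)$ in your description of the $x_1$ case (the surviving $\Ad$ terms are $-\Ad(\Ba+\Bf)$), which does not affect the argument.
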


\begin{proof}
  The coefficient of $x_3$ in equation \eqref{emp formula} is the right hand side of equation $\eqref{x3}$, and thus is $0$.  Similarly the coefficients of $x_4$, $x_5$, $x_6$, $x_7$, and $x_8$ are zero by \eqref{x4}, \eqref{x5}, \eqref{x6}, \eqref{x7}, and \eqref{x8} respectively.  The coefficient of $x_2$ is in a different form, but is also zero as it is the sum of the right hand sides of \eqref{partx2} and \eqref{x3}.  Finally, the coefficient of $x_1$ is the sum of the right hand sides of \eqref{x6}, \eqref{x4}, and \eqref{partx1} and so is zero.
\end{proof}

\begin{lemma}\label{apply Jacobi}
\[
  (1,1,1,1)(1,2,3,4)^2 = \det 
  \begin{pmatrix}
   \Aa + \Ac + \Ae & -\Ac & -\Ae \\
   -\Ac & \Ab + \Ac + \Af& -\Af\\
   -\Ae & -\Af & \Ad + \Ae + \Af
  \end{pmatrix}
\]
\end{lemma}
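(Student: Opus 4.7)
The plan is to apply the Jacobi identity (Corollary \ref{jacobi}) at level $k = 3$ to a well-chosen auxiliary graph. I would let $G'$ be $G$ with three additional edges, labeled $1$, $2$, $3$, all incident to $v_1$, with other endpoints $v_2$, $v_3$, $v_4$ respectively. Orienting edges $1$ and $3$ away from $v_1$ and edge $2$ into $v_1$ makes the quantities of Lemma \ref{Lemma signs} come out as $\epsilon(1,2) = \epsilon(2,3) = -1$ and $\epsilon(1,3) = +1$, precisely the values needed for the sign pattern in the statement.

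Taking $I = J = E = \{1,2,3\}$ in Corollary \ref{jacobi} gives
\[
\det\bigl(\Psi^{i,j}_{G', \{1,2,3\}}\bigr)_{1 \le i, j \le 3} = \Psi^{123,123}_{G', \{1,2,3\}}\bigl(\Psi_{G', \{1,2,3\}}\bigr)^{2}.
\]
Deleting all three new edges recovers $G$, so $\Psi^{123,123}_{G', \{1,2,3\}} = \Psi_G = (1,1,1,1)$. Contracting all three new edges identifies the four marked vertices into one, and the spanning trees of the resulting quotient correspond bijectively to spanning forests of $G$ with four components, one per marked vertex, whence $\Psi_{G', \{1,2,3\}} = (1,2,3,4)$. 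The right hand side of the Jacobi identity is therefore $(1,1,1,1)(1,2,3,4)^{2}$, matching the left hand side of the lemma.

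For the nine entries of the matrix on the left, the diagonal $\Psi^{i,i}_{G', \{1,2,3\}}$ is the spanning tree polynomial of $G' \setminus e_i$ with the other two new edges contracted, which identifies $v_1$ with the two of $v_2, v_3, v_4$ that are the far endpoints of the contracted edges; enumerating over which of the three resulting components contains the remaining marked vertex yields $\Aa + \Ac + \Ae$, $\Ab + \Ac + \Af$, and $\Ad + \Ae + \Af$ for $i = 1, 2, 3$. For $i \ne j$ only the single edge of $\{1,2,3\}\setminus\{i,j\}$ is effective in $\Psi^{i,j}_{G', \{1,2,3\}}$, so Lemma \ref{Lemma signs} applies directly, and with the chosen orientations reduces each off-diagonal entry to $-\Ac$, $-\Ae$, or $-\Af$; since $\epsilon(i,j)(-1)^{i-j+1}$ is invariant under swapping $i$ and $j$, symmetry of the matrix is automatic.

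The main obstacle is the sign bookkeeping, where the orientation-dependent $\epsilon(i,j)$ interacts with the parity factor $(-1)^{i-j+1}$ of Lemma \ref{Lemma signs}; one has to verify that the chosen orientations make all six off-diagonal entries simultaneously negative and that no stray sign sneaks in when $\Psi^{123,123}_{G', \{1,2,3\}}$ and $\Psi_{G', \{1,2,3\}}$ are identified with $(1,1,1,1)$ and $(1,2,3,4)$. Once this is in place, the claim follows by substituting the nine entries into the Jacobi identity.
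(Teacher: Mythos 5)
Your proposal is correct and follows essentially the same route as the paper: the same auxiliary graph with three edges at $v_1$, the same orientation (edge $2$ into $v_1$, edges $1$ and $3$ out), the same application of Corollary \ref{jacobi} with $I=J=E=\{1,2,3\}$, and the same identification of the diagonal and off-diagonal entries via deletion--contraction and Lemma \ref{Lemma signs}. The sign values you compute ($\epsilon(1,2)=\epsilon(2,3)=-1$, $\epsilon(1,3)=+1$) are exactly what the paper uses to make all off-diagonal entries negative.
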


\begin{proof}
Let $H$ be $G$ with three new edges $1$, $2$ and $3$ connecting vertex $v_1$ with the other 3 marked vertices. By Corollary \ref{jacobi} with $k = 3$ and $I = J = E = \{1,2,3\}$ we have
\begin{equation} \left(\Psi_{H,123}\right)^2 \Psi_{H}^{123,123} =\det 
  \begin{pmatrix}
   \Psi_{H,23}^{1,1} &\Psi_{H,3}^{1,2} &\Psi_{H,2}^{1,3} \\
   \Psi_{H,3}^{1,2} &\Psi_{H,13}^{2,2} &\Psi_{H,1}^{2,3} \\
   \Psi_{H,2}^{1,3} &\Psi_{H,1}^{2,3} &\Psi_{H,12}^{3,3}
  \end{pmatrix}
\end{equation}
where $\Psi_{H}^{123,123}$ is the graph polynomial of $G$ with the edges 1,2 and 3 removed, namely $\Psi_{H}^{123,123}=P=(1,1,1,1)$; $\Psi_{H,123}$ is the spanning forest polynomial of $G$ where each of the four vertices is in a separate tree, namely $\Psi_{H,123}=(1,2,3,4)$. 

The Dodgson polynomials on the main diagonal are just spanning forest polynomials of $G$ where one of the edges is removed and the other two contracted. By inspection, these are precisely the terms in the diagonal of the matrix in the result. The Dodgson polynomials on the off-diagonals require more care. We orient the edges like this: edge 2 goes towards vertex 1 and the other two away from it.

\[
  \includegraphics{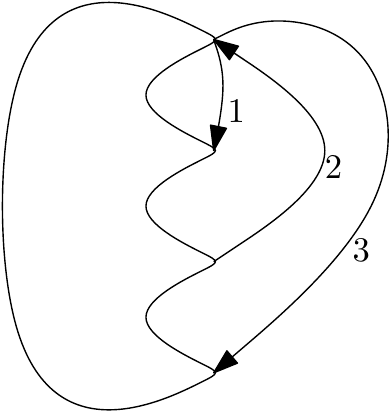}
\]
This ensures all the off-diagonal signs are negative (by Lemma \ref{Lemma signs}) and that each Dodgson polynomial gives the desired spanning forest polynomial. The result follows.
\end{proof}

Note that the matrix in Lemma \ref{apply Jacobi} is the Laplacian matrix with row and column 1 removed for the following graph
\[
\includegraphics[scale=0.90]{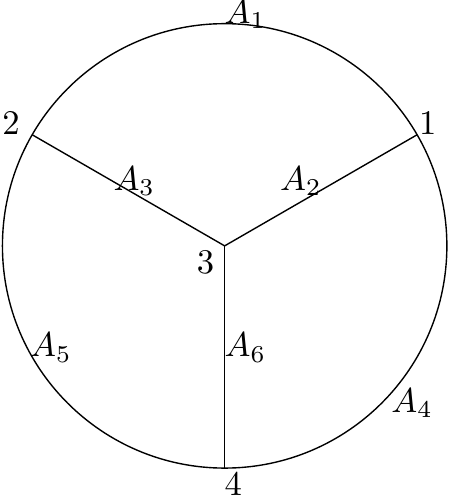}
\]
where the edge labels are the $A$'s. This is not a coincidence and there is a general identity which we leave out for brevity. However, the statement is analogous.

To complete the calculation we need to multiply the whole expression by $P$ and use the following
\begin{lemma}\label{PAi}
\begin{align*}
  P\Aa & = \Ba\Bb + \Ba\Be + \Bb\Be + \Be\Bf + \Be\Bg - \Bf\Bg  \\ 
  P\Ab & = \Ba\Bc + \Ba\Bf + \Bc\Bf + \Be\Bf - \Be\Bg + \Bf\Bg  \\ 
  P\Ac & = \Ba\Bd + \Ba\Bg + \Bd\Bg - \Be\Bf + \Be\Bg + \Bf\Bg  \\ 
  P\Ad & = \Bb\Bc + \Bb\Bg + \Bc\Bg - \Be\Bf + \Be\Bg + \Bf\Bg  \\ 
  P\Ae & = \Bb\Bd + \Bb\Bf + \Bd\Bf + \Be\Bf - \Be\Bg + \Bf\Bg  \\ 
  P\Af & = \Bc\Bd + \Bc\Be + \Bd\Be + \Be\Bf + \Be\Bg - \Bf\Bg 
\end{align*}
\end{lemma}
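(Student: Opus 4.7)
The six identities in the lemma are related by the natural action of $S_4$ on the four marked vertices, which permutes the $\Ai$'s (through their distinguished pairs) and the $\Bi$'s consistently with the statements. Thus it suffices to prove one identity, say that for $P\Aa$; the other five then follow by applying appropriate permutations to the marked-vertex labels.

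To prove the identity for $P\Aa$, apply the $k=2$ Dodgson identity (Corollary \ref{second cor}) to two families of auxiliary graphs. First, for each perfect matching $\{(v_a,v_b),(v_c,v_d)\}$ of the four marked vertices, attach the two matching edges to $G$ to form an auxiliary graph $H$ and apply the classical identity $\Psi^{1,1}_{H,2}\Psi^{2,2}_{H,1}-(\Psi^{1,2}_H)^2 = \Psi_H\Psi^{12,12}_H$. Expanding each Dodgson polynomial by deletion-contraction and \eqref{Psi to Phi}, and extracting the constant coefficient in the variables of the new edges, gives an equation of the form $P\sum_{i\in S}\Ai=q(\Bi)$ with $|S|=4$; the three perfect matchings produce three such equations. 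Second, for each marked vertex $v_d$, attach to $G$ a three-edge star (centered at another marked vertex, with one spoke ending at $v_d$) and apply Corollary \ref{second cor} with $|A|=|B|=1$ and $|I|=|J|=2$, where the $A=B$ index corresponds to the spoke at $v_d$ and the $I=J$ indices correspond to the other two spokes (whose variables are then set to zero). Analogous expansion yields an equation $P\sum_{i\in S'}\Ai=q'(\Bi)$ with $|S'|=3$; the four choices of $v_d$ give four such equations.

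Together these seven equations form a $7\times 6$ linear system in the unknowns $P\Aa,\ldots,P\Af$, whose coefficient matrix has rank six. The unique solution for $P\Aa$ can be written as
\[
  P\Aa = \tfrac{1}{2}\bigl(\pi(v_1)+\pi(v_2)-\sigma(v_1v_2,v_3v_4)\bigr),
\]
where $\pi(v_i)$ denotes the right-hand side of the 3-star equation with distinguished vertex $v_i$ and $\sigma(v_1v_2,v_3v_4)$ is the right-hand side of the matching equation for the pairs $\{(v_1,v_2),(v_3,v_4)\}$. Substituting the explicit quadratic expressions in the $\Bi$ and simplifying reproduces the stated identity.

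The main obstacle is careful sign bookkeeping for the off-diagonal Dodgson polynomial $\Psi^{1,2}_H$: it must be expanded using Lemma \ref{Lemma signs} when the two added edges share a marked vertex and via direct application of \eqref{Psi to Phi} when the edges are disjoint. In the disjoint case the expansion has the form $\pm(\Bf-\Bg)$, and the relative minus sign is what produces the crucial $-\Bf\Bg$ coefficient on the right-hand side of $P\Aa$. Aside from this, and from cancellation of the factor $\tfrac{1}{2}$ to give integer coefficients in the final identity, the remaining algebra is routine.
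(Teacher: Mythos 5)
Your argument is correct, but it takes a genuinely different route from the paper's. The paper notices that $P\Aa$ is itself, up to sign, a product of two minors of one auxiliary matrix --- it equals $-\Psi^{123,123}\Psi^{1,3}_2$ for $G$ with a three-edge star attached --- so a \emph{single} application of \eqref{klevelmod} (with $A=\{1\}$, $B=\{3\}$, $I=\{2,3\}$, $J=\{1,2\}$) rewrites it as $\Psi^{12,32}\Psi^{13,13}_2-\Psi^{12,31}\Psi^{13,23}$, and each factor converts directly to a sum of $\Bi$'s by Lemma \ref{Lemma signs}. Your constructions, by contrast, only ever yield equations of the shape $P\sum_{i\in S}\Ai=(\text{quadratic in the }\Bi)$, so you must assemble seven of them and solve; this does work --- the incidence vectors of your three matching equations and four star equations satisfy $\pi(v_1)+\pi(v_2)-\sigma(v_1v_2,v_3v_4)=2\,P\Aa$ on the $A$-side, and I verified that the individual matching and star equations expand consistently with the stated lemma --- at the cost of seven sign-sensitive expansions, a rank check, and a factor of $\tfrac12$ to clear, where the paper needs one identity and no linear algebra. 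The one soft spot is the relative sign in $\Psi^{1,2}_H=\pm(\Bf-\Bg)$ for two \emph{disjoint} added edges: Lemma \ref{Lemma signs} covers only edges sharing a vertex, so this case requires going back to \eqref{Psi to Phi} and redoing the incidence-matrix determinant computation. You correctly flag this as the main obstacle but do not carry it out; the sign is indeed a relative minus (your system would be inconsistent otherwise), so this is a gap of exposition rather than of substance.
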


\begin{proof}
 By symmetry of the four vertices it suffices to prove the formula for $P\Aa$. Consider the graph
  \[
  \includegraphics{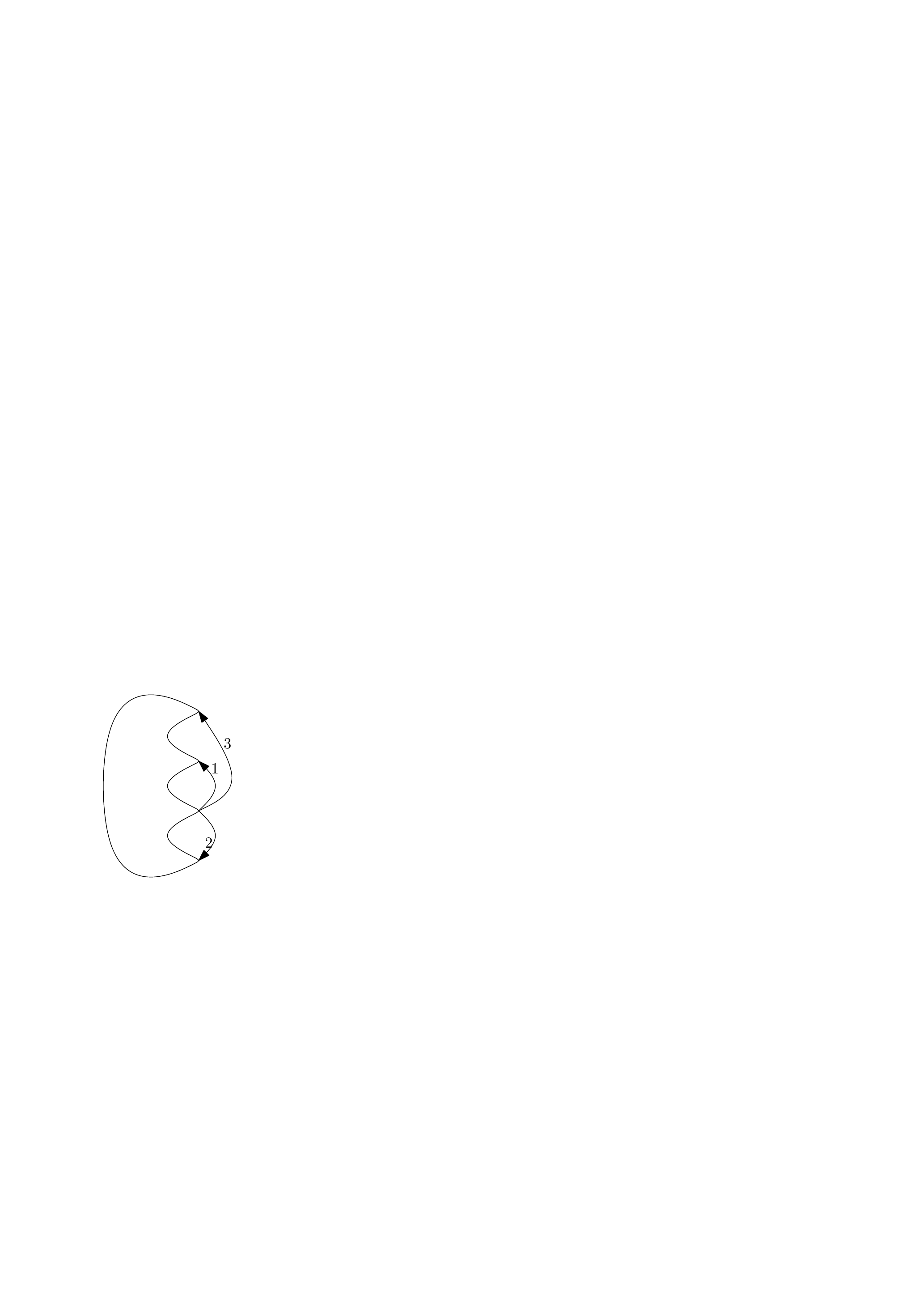}
  \]
  Then
  \begin{align*}
    P\Aa & = -\Psi^{123,123}\Psi^{1,3}_2 \quad \text{by Lemma \ref{Lemma signs}}\\
    & = \Psi^{12,32}\Psi^{13,13}_2 - \Psi^{12,31}\Psi^{13,23} \quad \text{by \eqref{klevelmod} with $A=\{1\}$, $B=\{3\}$, $I=\{2,3\}$,}\\
    & \phantom{\Psi^{12,32}\Psi^{13,13}_2 - \Psi^{12,31}\Psi^{13,23} \quad } \qquad  \text{$J=\{1,2\}$, and $E=\{1,2,3\}$} \\
    & = (1,1,2,-)(-,-,1,2) - (1,-,2,1)(-,1,2,1) \quad \text{by Lemma \ref{Lemma signs}} \\
    & = (\Bb+\Be)(\Ba+\Bb+\Bf+\Bg) - (\Bb+\Bg)(\Bb+\Bf) \\
    & = \Ba\Bb + \Ba\Be + \Bb\Be + \Be\Bf + \Be\Bg - \Bf\Bg
  \end{align*}
\end{proof}

Now we find out what happens when we multiply the equation in Lemma \ref{apply Jacobi} by $P$.
\begin{lemma}\label{main calc}
\[
((1,1,1,1)(1,2,3,4))^2 = E(0,0,0,0,0,0,0,0)E(0,1,0,1,1,1,1,1)
\]
where $E(x_1, x_2, x_3, x_4, x_5, x_6, x_7, x_8)$ is the right hand side of \eqref{emp formula}.
\end{lemma}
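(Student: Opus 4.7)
The plan is to multiply the identity of Lemma \ref{apply Jacobi} through by $P = (1,1,1,1)$. Since Lemma \ref{apply Jacobi} states $P\,(1,2,3,4)^2 = \det(M_3)$, where $M_3$ denotes the $3\times 3$ matrix on the right-hand side of that lemma, multiplying by $P$ gives
\[
\big((1,1,1,1)(1,2,3,4)\big)^2 = P\cdot \det(M_3).
\]
So it suffices to show that $P\cdot \det(M_3)$ equals the specialised product $E(0,0,0,0,0,0,0,0)\,E(0,1,0,1,1,1,1,1)$.

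First I would fully expand $\det(M_3)$ by cofactor expansion, collecting the resulting cubic polynomial in the six symbols $A_1,\ldots,A_6$ with its integer coefficients. Then I would apply Lemma \ref{PAi} to each cubic monomial $A_iA_jA_k$ by rewriting it as $A_iA_j\cdot (PA_k)$ and substituting the explicit quadratic-in-$B$ expression for $PA_k$. The outcome is a concrete expression for $P\det(M_3)$ of bidegree $(2,2)$ in the $A$'s and $B$'s, i.e.\ a linear combination of terms of the form $A_iA_jB_mB_n$.

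In parallel I would substitute $(x_1,\ldots,x_8)=(0,0,0,0,0,0,0,0)$ and $(x_1,\ldots,x_8)=(0,1,0,1,1,1,1,1)$ into the right-hand side of \eqref{emp formula} to obtain two explicit linear combinations of terms $A_iB_j$. Multiplying them gives a second explicit bidegree-$(2,2)$ polynomial in the $A$'s and $B$'s, which I would expand term by term.

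The final step is the comparison of the two resulting bidegree-$(2,2)$ expressions. They need not be equal as formal polynomials in the symbols $A_i, B_j$, because those symbols satisfy nontrivial quadratic relations (namely the nine Dodgson identities of Lemma \ref{Dodgsons in AB}), but they should coincide modulo the ideal generated by those relations. The main obstacle is therefore bookkeeping rather than any conceptual leap: the verification involves many terms and is best carried out in a computer algebra system, using the relations of Lemma \ref{Dodgsons in AB} to eliminate a chosen set of products (for instance, those involving $A_dB_e$, $A_dB_f$, etc., matching the variables $x_3,\ldots,x_8$ cleared in Proposition \ref{free vars}) and checking that the reduced normal forms of the two sides agree. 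The match is guaranteed to be possible in principle because Proposition \ref{free vars} already confirmed that $E$ lies in the correct coset modulo the Dodgson relations, and the computation of $P\det(M_3)$ pins down its value up to that coset.
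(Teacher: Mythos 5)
Your proposal is correct and follows essentially the same route as the paper: multiply the Jacobi determinant by $P$, rewrite each cubic $A$-monomial via Lemma \ref{PAi}, and show the resulting bidegree-$(2,2)$ expression differs from $E(0,0,0,0,0,0,0,0)\,E(0,1,0,1,1,1,1,1)$ by an element of the ideal of Dodgson relations (the paper does this last step by hand with a clever grouping that uses only \eqref{formain}, applied twice). One small caveat: Proposition \ref{free vars} only shows that $E$ is independent of the $x_i$, not that $E$ ``lies in the correct coset,'' so your closing remark that the match is guaranteed in principle is circular --- but it is not load-bearing, since the ideal-membership computation you propose to carry out is itself the verification (just be sure to test membership properly, e.g.\ via a Gr\"obner basis, rather than relying on an ad hoc elimination order giving unique normal forms).
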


\begin{proof} By definition
\begin{equation}\label{emp1}
E(0,0,0,0,0,0,0,0) = (A_5+A_6)(B_1+B_7) + A_6(B_2+B_6) + A_4(B_1+B_6)
\end{equation}
and 
\begin{equation}\label{emp2}
E(0,1,0,1,1,1,1,1) = (A_1+A_2)(B_4+B_7) + A_2(B_2+B_5) + A_4(B_4+B_5).
\end{equation}

\allowdisplaybreaks

Use Lemma \ref{apply Jacobi} and \ref{PAi} to calculate $((1,1,1,1)(1,2,3,4))^2$.  With some trial and error we chose which lines of Lemma \ref{PAi} to use so that the final result would look as much as possible like the product of \eqref{emp1} and \eqref{emp2}. The term $\left((1,1,1,1)(1,2,3,4)\right)^2$ equals
\begin{align*}
 &\textcolor{blue}{(A_1+A_2)(PA_3)(A_5+A_6)} + \textcolor{darkcyan}{A_2(PA_1)(A_5+A_6)} + \textcolor{darkgreen}{(A_1+A_2)(PA_5)A_6} \\
& \qquad + \textcolor{red}{A_4(A_1+A_2+A_5+A_6)(PA_3)} + \textcolor{magenta}{A_4(A_2+A_6)(PA_1+PA_5)} \\
& = \textcolor{blue}{(A_1+A_2)(A_5+A_6)(B_1B_7 + B_4B_7 + B_1B_4)} \\
& \qquad + \textcolor{darkcyan}{A_2(A_5+A_6)(B_1B_2+B_2B_5+B_1B_5+B_5B_7)} \\
& \qquad + \textcolor{darkgreen}{(A_1+A_2)A_6(B_2B_6+B_4B_6+B_2B_4+B_6B_7)} \\
& \qquad + \textcolor{red}{A_4(A_1+A_2+A_5+A_6)(B_1B_7 + B_4B_7 + B_1B_4 + B_5B_7+B_6B_7)} \\
& \qquad + \textcolor{magenta}{A_4(A_2+A_6)(B_1B_5 + B_2B_5 + B_1B_2 + B_2B_6 + B_4B_6 + B_2B_4+B_5B_6)}\\
& \qquad + B_5B_6A_2A_6 - \textcolor{blue}{B_5B_6A_1A_5} - \textcolor{red}{B_5B_6A_4(A_1+A_5)} \\
& \qquad + B_5B_7(A_1+A_2)A_5 + B_6B_7A_1(A_5+A_6)
\end{align*}

Now we consider the difference between this expression and \eqref{emp1} times \eqref{emp2}
\begin{align*}
& \textcolor{blue}{A_1A_5(-B_7^2 - B_5B_6 + B_5B_7 + B_6B_7) + A_1A_6(-B_7^2 + B_2B_6 - B_2B_7 + B_6B_7) }\\
  & \qquad \textcolor{blue}{+ A_2A_5(-B_7^2 + B_2B_5 - B_2B_7 + B_5B_7) + A_2A_6(-B_7^2 - 2B_2B_7 -B_2^2) }\\
  & \qquad + A_4A_5\textcolor{darkcyan}{(B_1B_7 + B_6B_7 - B_1B_5 - B_5B_6)} + A_4A_6\textcolor{darkgreen}{(B_1B_7 + B_6B_7 + B_1B_2 + B_2B_6)} \\
  & \qquad + A_1A_4\textcolor{red}{(B_4B_7 + B_5B_7 - B_6B_4 - B_5B_6)} + A_2A_4\textcolor{magenta}{(B_4B_7 + B_5B_7 + B_2B_5 + B_2B_4)} \\
  & \qquad - A_4^2(B_1+B_6)(B_4+B_5) \\
  & = \textcolor{blue}{-\big(A_6(B_2+B_7)- A_5(B_5-B_7)\big)\big(A_2(B_2+B_7)-A_1(B_6-B_7)\big)} \\
  & \qquad - A_4A_5\textcolor{darkcyan}{(B_1+B_6)(B_5-B_7)} + A_4A_6\textcolor{darkgreen}{(B_1+B_6)(B_2+B_7)} \\
  & \qquad - A_1A_4\textcolor{red}{(B_4+B_5)(B_6-B_7)} + A_2A_4\textcolor{magenta}{(B_2+B_7)(B_5+B_4)} - A_4^2(B_1+B_6)(B_4+B_5) \\
  & = -A_4(B_4+B_5)\big(A_2(B_2+B_7)-A_1(B_6-B_7)\big) \qquad \text{ by \eqref{formain}}\\
  & \qquad \textcolor{purple}{- A_4A_5(B_1+B_6)(B_5-B_7) + A_4A_6(B_1+B_6)(B_2+B_7)} \\
  & \qquad - A_1A_4(B_4+B_5)(B_6-B_7) + A_2A_4(B_2+B_7)(B_5+B_4) \textcolor{purple}{- A_4^2(B_1+B_6)(B_4+B_5)} \\
  & = \textcolor{purple}{A_4(B_1+B_6)\big(A_6(B_2+B_7) + A_5(B_7-B_5) - A_4(B_4+B_5)\big)} \\
  & = 0 \qquad \text{ by \eqref{formain}}
\end{align*}
\end{proof}

We are now ready to finish the proof of the main theorem.
\begin{proof}[Proof of Theorem \ref{main thm}]
By Lemma \ref{main calc} we know that
\[
E(0,0,0,0,0,0,0,0)E(0,1,0,1,1,1,1,1) = \big((1,2,3,4)(1,1,1,1)\big)^2
\]
and by Proposition \ref{free vars} we know that $E$ does not depend on the $x_i$.  Thus we have
\[
E(x_1, x_2, x_3, x_4, x_5, x_6, x_7, x_8) = \pm (1,2,3,4)(1,1,1,1)
\]
It remains to check the sign. Note that $(1,1,1,1)=P=\Psi_G$ and $(1,2,3,4)$ is $\Psi$ for $G$ with $v_1$, $v_2$, $v_3$, and $v_4$ identified. Since both $(1,1,1,1)$ and $(1,2,3,4)$ are Kirchhoff polynomials of graphs all monomials appear with nonnegative coefficients.  Looking at $\eqref{emp1}$ we see that the sign is $1$ and the proof is complete.
\end{proof}

\section{Conclusions}\label{sec concl}

Theorem \ref{main thm} gives a nice generalization of \eqref{3 and 1 id}.  Equation \eqref{3 and 1 id} itself is crucial to the combinatorial and algebro-geometric approach to understanding the periods of Feynman integrals \cite{Sphi4, bek, BrY, BrS, AMdet}.  In such work, having a good intuition of how to massage the polynomials which occur is crucial, and it is the second author's experience that spanning forest polynomials and their identities are very useful in this regard.

We can ask for an edge-transferring interpretation of Theorem \ref{main thm}, comparable to what we discussed for \eqref{3 and 1 id} in the introduction.  Consider \eqref{emp1}, which is the result of setting the free variables to $0$ in our main theorem.  Collecting terms this gives
\[
  (1,2,3,4)(1,1,1,1) = (1,2,3,1)(1,-,1,2) + (1,2,3,2)(-,1,1,2) + (1,2,3,3)(-,-,1,2)
\]
which says that we can choose to transfer an edge from any spanning forests contributing to $(1,1,1,1)$ to one of those contributing to $(1,2,3,4)$, so that we always merge the tree of the last vertex from $(1,2,3,4)$ into one of the other trees, and always split the last and second last vertices of $(1,1,1,1)$ into separate trees.  Furthermore, the identity describes precisely how the split trees will interact with the remaining vertices.  We know of no direct combinatorial proof which follows this interpretation.

We initially obtained \eqref{emp formula} by a numerical calculation.  We first picked a graph on which to perform the calculations -- we picked $K_4$, $K_5$ and $K_6$. Then we calculated each $\Ai$ and $\Bi$ on this graph and then formed all possible products of $A$'s and $B$'s and formed the sum $\sum_{s,t}x_{st}A_s B_t$, where $x_{st}$ is a constant, $1\leq s \leq 6$ and $1\leq t\leq 7$ for a total of 42 constants, and solved the linear system. 
The initial numerical calculation could, a priori, have had spurious degrees of freedom, but it could not miss any true identity of the desired form.  Consequently, \eqref{emp formula} is the most general quadratic formula involving 4 marked vertices.

A natural questions is what do formulae for more marked vertices look like. Numerical calculations show that for 5 and 6 marked vertices the formulae have 15 and 24 free variables. For the classical Dodgson identity, the $A$'s and $B$'s are the same.  If we treat the $A$'s and the $B$'s as different, we have a formula with 3 free variables. Trivially, a formula for 2 marked vertices has no free variables. For $n = 2,3,4,5$ and $6$ the identities so far point to expressions having 0, 3, 8, 15 and 24 variables in formulae for $n$ marked vertices. These numbers are generated by $n(n-2)$ for $n= 2,3,4,5$ and $6$.

\bibliographystyle{plain}
\bibliography{main}

\end{document}